\numberwithin{equation}{section}
\renewcommand{\leq}{\leqslant}
\renewcommand{\geq}{\geqslant}
\tikzstyle{Vertex}=[circle,draw=LimeGreen!80,fill=LimeGreen!8,
\tikzstyle{Node}=[Vertex,draw=RoyalBlue!80,fill=RoyalBlue!8,inner sep=1.5pt]
\tikzstyle{Leaf}=[rectangle,draw=Black!70,fill=Black!16,
\tikzstyle{Edge}=[Maroon!80,cap=round,line width=1pt]
\tikzstyle{Mark1}=[draw=BrickRed!80,fill=BrickRed!8]
\tikzstyle{Mark2}=[draw=BurntOrange!80,fill=BurntOrange!8]
\tikzstyle{EdgeRew}=[->,RedOrange!80,cap=round,thick]
\newcommand{\norm}[1]{\left\lVert #1 \right\rVert}
\newcommand \ens[1]{\left\{ #1\right\}}
\newcommand \R{\mathbb R}
\newcommand \N{\mathbb N}
\newcommand \Z{\mathbb Z}
\newcommand \abs[1]{\left|#1\right|}
\newcommand \eps{\varepsilon}
\newcommand{\F}{\mathcal F}
\newcommand{\prt}[1]{\left(#1\right)}
\newcommand{\pr}[1]{\left(#1\right)}
\newcommand{\E}[1]{\mathbb E\left[#1\right]}
\newcommand{\PP}{\mathbb P}
\newcommand{\gr}[1]{\mathbf{#1}}
\newcommand{\imd}{\preccurlyeq}
\newcommand{\smd}{\succcurlyeq}
\newcommand{\Hide}[1]{}
\title[Deviation inequalities martingales differences sequences and random fields]{Deviation 
inequalities for Banach space valued martingales differences sequences and random fields}
\keywords{Martingales, random fields, orthomartingales, deviation inequalities, 
complete convergence.}
\date{\today}
\author{Davide Giraudo}
\address{Ruhr-Universität Bochum
Fakultät für Mathematik
NA 3/32
Universitätsstraße 150
44780 Bochum‚.}
\email{Davide.Giraudo@ruhr-uni-bochum.de}
\renewcommand{\leq}{\leqslant}
\renewcommand{\geq}{\geqslant}
\newtheorem{Theorem}{Theorem}[section]
\newtheorem{Proposition}[Theorem]{Proposition}
\newtheorem{Lemma}[Theorem]{Lemma}
\newtheorem{Definition}[Theorem]{Definition}
\newtheorem{Corollary}[Theorem]{Corollary}
\theoremstyle{remark}
\newtheorem{Remark}[Theorem]{Remark}
\begin{document}

\begin{abstract}
We establish deviation inequalities for the maxima of partial sums of 
a martingale differences sequence, and of an orthomartingale differences random field. These 
inequalities can be used to give rates for linear regression and the law of large numbers. 
\end{abstract}

\maketitle 

\section{Introduction and main results}

Deviation inequalities play an important role in the study of properties 
of partial sums of random variables. A particular attention has been 
given to martingales. In Burkholder's paper \cite{MR0365692},
distribution function inequalities for maximum of martingales are established, 
and moment inequalities are derived from them. Sharp results has been obtained 
for martingales with bounded increments \cite{MR0144363,MR0221571}. When the 
increments of the considered martingale
are unbounded but square integrable, it is possible to control 
the tail function of the martingale by that of the increments 
and of the sum of conditional variances, like in 
\cite{MR0365692,MR736144,MR1681153,MR3005732,MR3311214}. 
When the tail of increments have a polynomial decay, it seems that 
Nagaev's inequality \cite{MR2021875} gives the most 
satisfactory results. It states the following: for any positive $q$, there exists a constant $C(q)$ 
such that if $\pr{S_n}_{n\geq 1}$ is a martingale defined on a probability space $\pr{\Omega,\mathcal F,\PP}$ and $X_i:=S_i-S_{i-1}$, then 
\begin{multline}\label{eq:Nagaev}
 \PP\ens{\abs{S_n}>x} \leq C(q)\int_0^1\PP\ens{\max_{1\leq i\leq n}\abs{X_i}>xu}u^{q-1}\mathrm du
 \\+C(q)\int_0^1\PP\ens{ \pr{\sum_{i=1}^n\E{X_i^2\mid \F_{i-1}}}>xu}u^{q-1}\mathrm du.
\end{multline}
The constant $C(q)$ is of order $e^{e^q }$. The result (without the absolute valued in the left hand side 
of \eqref{eq:Nagaev}) holds for supermartingales. There are three possibilities of improvement of the 
version of Nagaev's result for martingales:
\begin{itemize}
 \item the result can only be used for square integrable martingales.  One can wonder 
 whether a similar inequality as \eqref{eq:Nagaev} holds when $X_i\in\mathbb L^p$ where 
 $1<p<2$.
 \item In \cite{MR2021875}, the real valued case is considered, and the proof suggests that the 
 extension to the Banach valued case is challenging.
 \item Finally, the improvement of the constant $C(q)$ is also of interest.
\end{itemize}

Let us explain the idea of proof of an extension of \eqref{eq:Nagaev} (see 
Theorem~\ref{thm:deviation_inequality_non_stationary_martingale_diff}) in the real valued case, with square integrability. Define 
\begin{equation}
 f(x):=\PP\ens{\abs{S_n}>x}\mbox{ and }
\end{equation}
\begin{equation}
 g(x):=\PP\ens{\max_{1\leq i\leq n}\abs{X_i}>x} +\PP\ens{ \pr{\sum_{i=1}^n\E{X_i^2\mid 
 \F_{i-1}}}>x}, 
\end{equation}
We prove in Lemma~\ref{lem:functional_inequality} that for any positive $x$.
\begin{equation}\label{eq:etape_cle_idee_demo}
 f\pr{2x}\leq \delta^2\pr{1-\delta}^{-2}
f(x)+g\pr{\delta x}.
\end{equation}
This is done by using a martingale transform of the original martingale, the former having small conditional variances. 
Using monotonicity of the function $g$, \eqref{eq:etape_cle_idee_demo} can be converted 
into an integral inequality.

The paper is organized as follows: in Subsection~\ref{subsec:martingale_differences_sequences}, 
we state a deviation inequality for any Banach space valued martingale 
differences sequence, then for stochastically dominated or identically distributed sequences. In 
Subsection~\ref{subsec:orthomartingale_random_fields}, we
review orthomartingales, and state a deviation inequality 
for orthomartingale differences random fields. Section~\ref{sec:applications} 
is devoted to applications to linear regression
and Baum-Katz estimates martingale differences sequence 
and orthomartingale differences random fields. All these results are proven 
in Section~\ref{sec:proofs}.

  \subsection{Martingale differences sequences}
  \label{subsec:martingale_differences_sequences}

  \subsubsection{General case}
\begin{Definition}
 Let $\prt{\Omega,\mathcal F,\PP}$ be a probability space and 
let $\prt{B,\norm{\cdot}_{B}}$ be a separable Banach space. For any 
$p\geq 1$, we denote by $\mathbb L^p_B$ the space of $B$-valued 
random variables such that $\norm{X}_{\mathbb L^p_B}^p=\E{ \norm{X}^p  }$ 
is finite. Let $\prt{\F_i}_{i\geq 1}$ be an non-decreasing sequence of 
sub-$\sigma$-algebras of $\F$. We say that a sequence of $B$-valued 
random variables $\pr{X_i}_{i\geq 1}$ is a martingale differences 
sequence with respect to the filtration $\prt{\F_i}_{i\geq 1}$ if 
\begin{enumerate}
 \item for any $i\geq 1$, $X_i$ is $\F_i$-measurable and 
 belongs to $\mathbb L^1_B$;
 \item for any $i\geq 2$, $\E{X_i\mid \F_{i-1}}=X_{i-1}$ almost 
 surely.
\end{enumerate}
\end{Definition}

\begin{Definition}
 Following \cite{MR0394135}, we say that a Banach space 
 $\prt{B,\norm{\cdot}}$ is $r$-smooth ($1<r\leq 2$) 
 if there exists an equivalent norm $\norm{\cdot}'$ such that 
 \begin{equation*}
  \sup_{t>0}\frac 1{t^r}\sup\ens{ \norm{x+ty}'+\norm{x-ty}'-2: \norm{x}'=\norm{y}'=1  }<\infty.
 \end{equation*}
\end{Definition}

From \cite{MR0407963}, we know that if $B$ is $r$-smooth and separable, 
then there exists a constant $D$ such that for any sequence of 
$B$-valued martingale differences $\prt{X_i}_{i\geq 1}$,

\begin{equation}\label{eq:rD_smooth_martingale}
 \E{ \norm{\sum_{i=1}^nX_i}^r} \leq D  \sum_{i=1}^n\E{\norm{X_i}^r}.
\end{equation}
 
Since an $r$-smooth Banach space is also $r'$-smooth for any $1<r'\leq r$, there exists a 
constant $C_{r',B}$ such that 
such that for any sequence of 
$B$-valued martingale differences $\prt{X_i}_{i\geq 1}$, and any integer $n$, 

\begin{equation}\label{eq:r'_smooth_martingale}
 \E{ \norm{\sum_{i=1}^nX_i}^{r'}} \leq C_{r',B}  \sum_{i=1}^n\E{\norm{X_i}^{r'}}.
\end{equation}

Our first main result is an inequality in the spirit of  
Theorem~1 in \cite{MR2021875}.

  \begin{Theorem}\label{thm:deviation_inequality_non_stationary_martingale_diff}
   Let $\pr{B,\norm{\cdot}}$ be a separable $r$-smooth Banach space where 
   $1<r\leqslant 2$. 
   For each $1<r'\leqslant r$, $q>0$ and for any $B$-valued martingale 
   differences sequence $\pr{X_i,\F_i}_{i\geq 1}$, the following inequality 
   holds for each $n\geq 1$ and $x>0$:
   \begin{multline}\label{eq:deviation_inequality_non_stationary_martingale_diff}
    \PP\ens{\max_{1\leqslant i\leqslant n}\norm{S_i}>x }
    \leq \frac{2^q}{2^q-1}q2^{-r'}\int_0^1\PP\ens{\max_{1\leqslant i\leqslant n}
    \norm{X_i}>2^{-1-q/r'}C_{r',B}^{-1/r'}xu}u^{q-1}\mathrm du\\+\frac{2^q}{2^q-1}q2^{-r'}\int_0^1\PP\ens{
    \pr{\sum_{i=1}^n
    \E{\norm{X_i}^{r'}\mid \F_{i-1}}}^{1/r'}>2^{-1-q/r'}C_{r',B}^{-1/r'}xu}u^{q-1}\mathrm du,
   \end{multline}
   where $S_i=\sum_{i=1}^nX_i$ and $C_{r',B}$ is a constant satisfying 
   \eqref{eq:r'_smooth_martingale} for any $n$ and any martingale differences sequence.
  \end{Theorem}
  
  \begin{Remark}
  On one hand, Nagaev's result \cite{MR2021875} applies to real valued supermartingales, while 
  our result is restricted to martingales. On the other hand, when applied to the latter 
  class of random variable, our result gives a generalization in two directions. First, 
  we consider Banach space valued random variables. Second, even when restricted to 
  real-valued random variables, our result can be used to treat martingales whose 
  increments do not necessarily have a finite moment of order $2$. 
  \end{Remark}
  
  In the independent setting, the terms $\E{\norm{X_i}^{r'}\mid \F_{i-1}}$ are constant hence 
  we can state the following Corollary of 
  Theorem~\ref{thm:deviation_inequality_non_stationary_martingale_diff}.
  
  \begin{Corollary}
   Let $\pr{B,\norm{\cdot}}$ be a separable $r$-smooth Banach space where 
   $1<r\leqslant 2$. 
   For each $1<r'\leqslant r$, $q>0$ and for any independent centered
    sequence $\pr{X_i}_{i\geq 1}$, the following inequality 
   holds for each $n\geq 1$ and $x>0$:
   \begin{multline}\label{eq:deviation_inequality_independent}
    \PP\ens{\max_{1\leqslant i\leqslant n}\norm{S_i}>x }
    \leq \frac{2^q}{2^q-1}q2^{-r'}\int_0^1\PP\ens{\max_{1\leqslant i\leqslant n}
    \norm{X_i}>2^{-1-q/r'}C_{r',B}^{-1/r'}xu}u^{q-1}\mathrm du\\+C_{r',B}^{(q-1)/r'}\frac{2^q}{2^q-1}2^{-r'}
    2^{q+q^2/r'}x^{-q}\pr{\sum_{i=1}^n
    \E{\norm{X_i}^{r'}}}^{q/r'},
   \end{multline}
   where $S_i=\sum_{i=1}^nX_i$ and $C_{r',B}$ is a constant satisfying 
   \eqref{eq:r'_smooth_martingale} for any $n$ and any martingale differences sequence.
  \end{Corollary} 
  
   \subsubsection{Stochastically dominated sequences}
  
  For a random variable $Y$ with values in the Banach space $\pr{B,\norm{\cdot}}$, 
  we denote by $Q_Y$ the generalized inverse of the function $t\mapsto 
  \PP\ens{\norm{Y}>t}$, that is, 
  \begin{equation}
   Q_Y\pr{u}:=\inf\ens{t>0 \mid \PP\ens{\norm{Y}>t} \leq u}, \quad u \in [0,1].
  \end{equation}

  \begin{Definition}
  Let $\pr{X_i}_{i\geq 1}$ be a sequence of random variables with values in a Banach 
  space $\pr{B,\norm{\cdot}}$ and let $X\colon \Omega\to \R$ be a real valued 
  random variable. We say that $\pr{X_i}_{i\geq 1}\prec X$ if for all $u\in [0,1]$, 
  and any $i\geq 1$, $Q_{X_i}\pr{u}\leq Q_X\pr{u}$.
  \end{Definition}

  In the case where the random variables $X_i$, $1\leq i\leq n$ are stochastically 
  dominated and $\E{\norm{X_i}^{r'}\mid \F_{i-1}}$ bounded by identically distributed random 
  variables, the result of 
  Theorem~\ref{thm:deviation_inequality_non_stationary_martingale_diff} 
  admits the following simplification.
  
  \begin{Theorem}\label{thm:deviation_inequality_stationary_martingale_diff}
   Let $\pr{B,\norm{\cdot}}$ be an $r$-smooth separable 
   Banach space. For each $1<r'\leqslant r$ and each $q>r'$, for any 
   martingale differences sequence $\prt{X_i, \F_i}_{i\geq 0}$
   with values in $B$ such that there exists real valued random variables 
   $X$ and $V_i$, $i\geq 1$ for which
   $\pr{X_i}_{i\geq 1}\prec X$, 
   $\E{\norm{X_i}^{r'}\mid\F_{i-1}}\leq V_i$ a.s. and $\pr{V_i}_{i\geq 1}$ 
   is identically distributed, then 
    the following inequality hold for any $n\geq 1$ and $x>0$:
   \begin{multline}\label{eq:deviation_inequality_stationary_martingale_diff_cond_var}
    \PP\ens{\max_{1\leqslant i\leqslant n}\norm{S_i}>xn^{1/r'} }
    \leqslant \frac{2^q}{2^q-1}q2^{-r'}n\int_0^1\PP\ens{
    X>2^{-1-q/r'}C_{r',B}^{-1/r'}xun^{1/r'}}u^{q-1}\mathrm du\\+
     \frac{2^q}{2^q-1}\frac{q2^{-r'}}{q-r'}\int_0^{+\infty}
    \PP\ens{V_1>2^{-2-q/r'}C_{r',B}^{-1/r'}x^{r'}w}
    \min\ens{w^{\frac{q-r'}{r'}},1}\mathrm dw.
   \end{multline}
   If the sequence $\pr{\norm{X_i}}_{i\geq 1}$ is identically distributed, then 
    for any $n\geq 1$ and $x>0$:
   \begin{equation}\label{eq:deviation_inequality_stationary_martingale_diff}
    \PP\ens{\max_{1\leqslant i\leqslant n}\norm{S_i}>xn^{1/r'} }
    \leqslant  \frac{2^{q+1}}{2^q-1}\frac{q2^{-r'}}{q-r'}\int_0^{+\infty}
    \PP\ens{\norm{X_1}>xu}\min\ens{u^{q-1},u^{r'-1}}\mathrm du,
   \end{equation}
   where $S_i=\sum_{j=1}^iX_j$.
  \end{Theorem}

  
  \subsection{Orthomartingale differences random fields}
  \label{subsec:orthomartingale_random_fields}

  The results of the previous section can be extended in some 
  sense to random fields, that is, processes indexed by $\N^d$ or 
  $\Z^d$ where $d\geq 1$ be an integer. In order to state them, we have to give a precise definition of 
  martingales in this setting.
  
  We use the following notations:
  \begin{enumerate}
  \item for $\gr{i}=\pr{i_q}_{q=1}^d$ and $\gr{j}=\pr{j_q}_{q=1}^d$ we write $\gr{i}\imd \gr{j}$ 
  if and only if $i_q\leq j_q$ for all $q\in \ens{1,\dots,d}$;
   \item if $\gr k$ and $\gr l\in \Z^d$ the coordinatewise minimum is defined by 
  $\min\ens{\gr{k},\gr{l}}=\prt{ \min\ens{k_i,l_i}}_{i=1}^d$.
  \item The addition is defined coordinatewise.
  \item If $\gr{n}=\pr{n_q}_{q=1}^d$ is an element of $\N^d$, then 
  $\abs{\gr{n}}$ denotes $\prod_{q=1}^dn_q$.
  \item For $j\in\ens{1,\dots,d}$, $\gr{e_j}$ denotes the element of $\Z^d$ 
  whose $j$-th coordinate is $1$ and all the others are zero. Moreover, $\gr{1}$ is the 
  element of $\Z^d$ whose all coordinates are $1$.
  \end{enumerate}

  \begin{Definition}
   The family $\pr{\F_{\gr{i}}}_{\gr{i}\in\Z^d}$ of sub-$\sigma$-algebras of $\F$ 
   is a filtration if $\F_{\gr{i}} \subset \F_{\gr{j}}$ whenever $\gr{i}\imd 
   \gr{j}$.
  \end{Definition}

  \begin{Definition}
  Let $\pr{\F_{\gr{i}}}_{\gr{i}\in\Z^d}$ be a filtration.
   If for each $\gr{i},\gr{j}\in \Z^d$ and each integrable random 
variable $Y$, 
 \begin{equation}
  \E{\E{Y \mid  \F_{\gr{i}}} \mid \F_{\gr{j}}}
  =\E{\E{Y \mid  \F_{\gr{j}}} \mid \F_{\gr{i}}}
  =\E{Y\mid \F_{\min\ens{\gr{i},\gr{j}}}}
  \mbox{ almost surely},
 \end{equation}
the filtration $\pr{\F_{\gr{i}}}_{\gr{i}\in\Z^d}$ is said to be commuting.
  \end{Definition}

 \begin{Definition}
  The collection of random variables $\ens{M_{\mathbf n},\mathbf n\in \Z^d}$ is 
said to be an orthomartingale random field with respect to the commuting
 filtration 
$\left( \F_{\gr{i}}\right)_{\mathbf i\in \Z^d}$ if 
for each $\mathbf n\in \N^d$, $M_{\mathbf n}$ is 
$\F_{\mathbf n}$-measurable, integrable 
and for each  $\mathbf i,\mathbf j\in \Z^d$ such that  $\mathbf i\preccurlyeq \mathbf j$, 
  \begin{equation}
   \mathbb E\left[M_{\mathbf j}\mid\F_{\mathbf i}\right]=M_{\mathbf i}.
  \end{equation}
 \end{Definition}
 
 \begin{Definition}
   The collection of random variables $\pr{X_{\gr{i}}}_{\gr{i}\in \Z^d}$ is 
said to be an orthomartingale diffrences random field with respect to the commuting
 filtration 
$\left( \F_{\gr{i}}\right)_{\mathbf i\in \Z^d}$ if the random field $\pr{S_{\gr{n}}}_{\gr{n}\in\Z^d}$ 
defined by 
\begin{equation}\label{eq:definitio_of_partial_sums}
 S_{\gr{n}}:=\begin{cases}
              \sum_{\gr{1}\imd\gr{i}\imd\gr{n}}X_{\gr{i}}&\mbox{if }\gr{n}\smd \gr{1},\\
              0&\mbox{ otherwise,}
             \end{cases}
\end{equation}
is an orthomartingale random field with respect to the filtration 
$\left( \F_{\gr{i}}\right)_{\mathbf i\in \Z^d}$.
 \end{Definition}

In all this subsection, we shall make the following assumption on the random field 
$\pr{X_{\gr{i}}}_{\gr{i}\in \Z^d}$, namely:
\begin{equation}\label{eq:stationarity_of_sums}
\mbox{for all }\gr{n}\smd \gr{1}, \gr{l}\in\Z^d, 
 \norm{\sum_{\gr{1}\imd\gr{i}\imd\gr{n}}X_{\gr{i}}} \mbox{ and }
  \norm{\sum_{\gr{1}\imd\gr{i}\imd\gr{n}}X_{\gr{i}+\gr{l}}} \mbox{ have the same distribution.}
\end{equation}

  Orthomartingale random fields have good properties with respect to 
  marginal filtrations $\mathcal F_q^{(d)}:=\sigma\left(\mathcal F_{\mathbf k},
  k_q\leqslant j,\mathbf k\in \Z^d\right)$, 
  $q\in \ens{1,\dots,d}$. Furthermore, when a coordinate is 
  fixed, we still have an orthomartingale random field with respect to a 
  commuting filtration (see \cite{MR1914748}, p.37, Theorem~3.5.1).

  \begin{Lemma}\label{lem:properties_orthomartingales}
  Let $\left(X_{\mathbf i}\right)_{\mathbf i\in \Z^d}$ be an
orthomartingale difference random field with respect to the commuting filtration 
$\left( \F_{\gr{i}}\right)_{\mathbf i\in \Z^d}$, with values in a separable Banach space 
$\pr{X,\norm{\cdot}}$. Then the following properties hold.
\begin{enumerate}[label=(P.\arabic*)]
 \item For any $\mathbf n=(n_1,\dots,n_{d-1})\in \N^{d-1}$, the sequence 
 $\left(S_{(\mathbf n,j)} \right)_{j\geqslant 0}$ is a martingale with respect to the 
 filtration $\left(\mathcal F_j^{(d)}\right)_{j\geqslant 0}$.
 
 \item\label{submartingale} For any $\mathbf n=(n_1,\dots,n_{d-1})\in \N^{d-1}$, the sequence 
 $\left(\max_{\mathbf i\in [\mathbf 1,\mathbf n]}
 \norm{S_{(\mathbf i,j)} }\right)_{j\geqslant 0}$ is a non-negative submartingale
  with respect to the 
 filtration $\left(\mathcal F_j^{(d)}\right)_{j\geqslant 0}$. 
 
 \item\label{fixed_coordinate} For any $j\in \mathbb N$, the random field 
 $\left(S_{(\mathbf n,j)} \right)_{\mathbf n\succcurlyeq 
 \mathbf 1}$ is an orthomartingale with respect to the commuting filtration 
 $\left( \F_{\gr{i}}'\right)_{\mathbf i\in \Z^{d-1}}$, where 
 $\F_{\gr{0}}'$ is the 
 $\sigma$-algebra generated by $\bigcup_{l\in \Z}\F_{l\gr{e_{d}}}$.
\end{enumerate}
 \end{Lemma}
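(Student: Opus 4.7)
The plan is to derive each of the three statements from the defining orthomartingale identity $\E{S_{\mathbf j}(m)\mid\F_{\mathbf i}}=S_{\mathbf i}(m)$ for $\mathbf i\preccurlyeq\mathbf j$ in $\N^d$, combined with complete commutativity of $(\F_{\mathbf i})_{\mathbf i\in\Z^d}$. The underlying device, used throughout, is that whenever $Y$ is $\F_{(\mathbf l, L)}$-measurable and $A\in\F_{(\mathbf l', \ell)}$, complete commutativity yields $\E{Y \mid \F_{(\mathbf l', \ell)}} = \E{Y \mid \F_{(\mathbf l' \wedge \mathbf l, \ell \wedge L)}}$, collapsing conditional expectations onto a diagonal where the orthomartingale identity applies.

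For part~(1), fix $\mathbf n\in\N^{d-1}$. By approximation, any $A\in\F_j^{(d)}$ may be taken in some $\F_{(\mathbf l', \ell)}$ with $\mathbf l'\succcurlyeq\mathbf 0$ and $0\leqslant\ell\leqslant j$. The device above applied to $Y=S_{(\mathbf n, j+1)}(m)$, followed by the orthomartingale identity at the (non-negative) index $(\mathbf l' \wedge \mathbf n, \ell)$, gives
\[
\E{S_{(\mathbf n, j+1)}(m) \mid \F_{(\mathbf l', \ell)}} = S_{(\mathbf l' \wedge \mathbf n, \ell)}(m),
\]
and an identical expression emerges with $j$ in place of $j+1$ since $\ell\leqslant j$. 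Testing against $\mathbf 1_A$ and invoking $\F_j^{(d)}$-measurability of $S_{(\mathbf n, j)}(m)$ concludes $\E{S_{(\mathbf n, j+1)}(m) \mid \F_j^{(d)}} = S_{(\mathbf n, j)}(m)$.

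Part~(2) is immediate from~(1): each $(S_{(\mathbf i, j)}(m))_{j\geqslant 0}$ with $\mathbf i\in[\mathbf 1, \mathbf n]$ is a martingale, so Jensen makes $(\abs{S_{(\mathbf i, j)}(m)})_{j\geqslant 0}$ a non-negative submartingale, and a finite maximum of submartingales is itself a submartingale.

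For part~(3), fix $j$ and set $\F_{\mathbf i}':=T^{-\mathbf i}\F_{\gr{0}}'$ for $\mathbf i\in\Z^{d-1}$. Measurability and integrability of $S_{(\mathbf n, j)}(m)$ with respect to $\F_{\mathbf n}'$ follow from $\F_{(\mathbf n, j)}\subset\F_{\mathbf n}'$, and the orthomartingale identity along $(\F_{\mathbf i}')$ is obtained by the same complete-commutativity computation as in~(1), testing against generators of $\F_{\mathbf n}'$ of the form $A\in\F_{(\mathbf n, \ell)}$. The principal obstacle is to verify that $(T_1, \dots, T_{d-1})$ is completely commuting with respect to $(\F_{\mathbf i}')_{\mathbf i\in\Z^{d-1}}$: this requires a monotone class argument to approximate $\F_{\mathbf l}'$-measurable variables by $\F_{(\mathbf l, L)}$-measurable ones and then to transfer complete commutativity of the original $d$-dimensional filtration through the upward limit $L\to\infty$.
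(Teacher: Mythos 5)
Your argument is correct, and it is worth noting that the paper itself does not prove this lemma at all: it treats items (1)--(2) as classical and, for item (3), points to Khoshnevisan \cite{MR1914748}, Theorem~3.5.1, so your proposal supplies the standard argument that the paper leaves to the reference. Your mechanism is the right one throughout: complete commutativity collapses $\E{S_{(\mathbf n,j+1)}(m)\mid \F_{(\mathbf l',\ell)}}$ onto the index $(\mathbf l'\wedge\mathbf n,\ell)$ where the orthomartingale identity applies, the union $\bigcup_{\mathbf l'\succcurlyeq \gr{0},\,\ell\leqslant j}\F_{(\mathbf l',\ell)}$ is a directed union of $\sigma$-algebras, hence a generating algebra of $\F_j^{(d)}$ against which testing is legitimate, and (2) follows by conditional Jensen plus the fact that a finite maximum of submartingales is a submartingale. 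For the step you flag as the principal obstacle in (3), the ``upward limit'' is precisely L\'evy's upward martingale convergence theorem: for $Y$ integrable and $\F_{(\mathbf l,L)}$-measurable, $\E{Y\mid\F_{(\mathbf k,M)}}=\E{Y\mid\F_{(\mathbf k\wedge\mathbf l,\,M\wedge L)}}$ stabilizes at $\E{Y\mid\F_{(\mathbf k\wedge\mathbf l,L)}}$ once $M\geqslant L$ and converges to $\E{Y\mid\F_{\mathbf k}'}$ as $M\to+\infty$, and general $\F_{\mathbf l}'$-measurable $Y$ are then handled by $\mathbb L^1$-approximation ($\E{Y\mid\F_{(\mathbf l,L)}}\to Y$ in $\mathbb L^1$) together with the $\mathbb L^1$-contractivity of conditional expectation; so your sketch closes as stated. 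Two routine points you should still record: the identification $T^{-\mathbf i}\F_{\gr{0}}'=\bigvee_{L}\F_{(\mathbf i,L)}$, and the inclusion $\F_{\gr{0}}'\subset T_q^{-1}\F_{\gr{0}}'$ for $q\leqslant d-1$ (which follows from $\F_{\gr{0}}\subset T_q^{-1}\F_{\gr{0}}$ and commutation of the maps), so that $\left(T^{-\mathbf i}\F_{\gr{0}}'\right)_{\mathbf i\in\Z^{d-1}}$ is indeed a filtration for the coordinatewise order.
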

 
  We now state the analogue of Theorem~\ref{thm:deviation_inequality_stationary_martingale_diff} 
  for strictly stationary orthomartingale differences random fields.

 \begin{Theorem}\label{thm:deviation_inequality_stationary_orthomartingale_diff}
   Let $\pr{B,\norm{\cdot}}$ be an $r$-smooth separable 
   Banach space. For each $1<r'\leqslant r$, $q>r'$ and positive integer $d$, there 
   exist a constant 
   $C$ depending only on $r'$, $q$, $d$ and $B$ such that for any strictly stationary 
   orthomartingale differences random field $\prt{X_{\gr{i}}, 
    \mathcal F_{\gr{i}}}_{i\in\Z^d}$
   with values in $B$, the following inequality holds for any $\gr{n}\in \N^d$ and $x>0$:

   \begin{equation}\label{eq:deviation_inequality_stationary_orthomartingale_diff}
    \PP\ens{\max_{\gr{1}\preccurlyeq  \gr{i}\preccurlyeq \gr{n}}
    \norm{S_{\gr{i}}}>x\abs{\gr{n}}^{1/r'} }\\
    \leq  C\int_0^{+\infty}
    \PP\ens{\norm{X_1}>xu}\min\ens{u^{q-1},u^{r'-1}}\prt{1+\abs{\log u}}^{d-1}
    \mathrm du.
   \end{equation}
  \end{Theorem}

  \begin{Remark}
   One can integrate the previously obtained inequalities to get moment inequalities. 
   For example, it is possible to recover a multidimensional Burkholder-like inequality 
   in the stationary case, like in \cite{MR2264866}. Like in the one dimensional 
   case, it is also possible to establish inequalities in weak $\mathbb L^p$ spaces 
   like in \cite{MR995572}, Remark~6.
  \end{Remark}

\section{Applications}
\label{sec:applications}

\subsection{Linear regression}

 We consider the stochastic linear regression model given by 
 \begin{equation}
  X_k=\theta \phi_k+\eps_k,\quad 1\leq k\leq n,
 \end{equation}
where 
\begin{itemize}
 \item $\pr{X_k}_{1\leq k\leq n}$ are the observations,
 \item $\pr{\phi_k}_{1\leq k\leq n}$ are the regression variables and 
 \item $\pr{\eps_k}_{1\leq k\leq n}$ the driven noises.
\end{itemize}

We shall make the following assumptions:
\begin{enumerate}[label=(A.\arabic*)]
 \item\label{asum:hyp_1} the sequence $\pr{\phi_k}_{1\leq k\leq n}$ is independent;
 \item\label{asum:hyp_2} the $\sigma$-algebra generated by $\phi_k$, $1\leq k\leq n$ 
 is independent of the  $\sigma$-algebra generated by $\eps_k$, $1\leq k\leq n$;
 \item\label{asum:hyp_3} For each $k\in\ens{2,\dots,n}$, $\E{\eps_k\mid \sigma\pr{\eps_i, 
 1\leq i\leq k-1}}=0$ and $\E{\eps_1}=0$.
\end{enumerate}
Let $\theta_n$ be the least square estimator defined by 
\begin{equation}
 \theta_n:=\frac{\sum_{k=1}^n\phi_kX_k}{\sum_{i=1}^n\phi_i^2}.
\end{equation}
\begin{Theorem}\label{thm:linear_regression}
 Suppose that the assumptions \ref{asum:hyp_1}, \ref{asum:hyp_2} and 
 \ref{asum:hyp_3} hold. Suppose that there exists constant $C_1$ and $C_2$ such that 
 for any $i\in\ens{1,\dots,n}$, 
 \begin{equation}
 \E{\abs{\eps_i}^p}\leq C_1 \mbox{ and }
 \E{\eps_i^2\mid\sigma\pr{\eps_j, 1\leq j\leq i-1}}\leq C_2\mbox{ a.s}.
 \end{equation}
Then for any $p>2$, $q>p$ and any $x>0$, 
\begin{equation}
 \PP\ens{\abs{\theta_n-\theta}\sqrt{\sum_{i=1}^n\phi_i^2}>x}\leq 
  C_1\frac{2^{q-2}}{2^q-1}\frac{q}{q-p}  2^{p+pq/2} x^{-p}+
  \frac{2^{q-2}}{2^q-1}q 2^{q+q^2/2}x^{-q}C_2^{q/2}.
\end{equation}

\end{Theorem}

Let us compare Theorem~\ref{thm:linear_regression} with the results in \cite{MR3579898}. 
When $x$ is large, Theorem~\ref{thm:linear_regression} and Theorems~3.3 and 3.4 in 
 \cite{MR3579898} give an upper bound of order $x^{-p}$.
\begin{enumerate}
 \item In Theorem~3.3 of \cite{MR3579898}, it is assumed that 
  $\sup_i\norm{\E{\abs{\eps_i}^p\mid \sigma\ens{\eps_k,1\leq k\leq i-1}}}_{\infty}<+\infty$,
   which is more restrictive than the assumption in Theorem~\ref{thm:linear_regression}.
 \item  In Theorem~3.4 of \cite{MR3579898}, $\sup_i\norm{\E{\abs{\eps_i}^2\mid \sigma
 \ens{\eps_k,1\leq k\leq i-1}}}_{\infty}<+\infty$ and that there exists a positive $\delta$ and $C_1$ 
 such that for all $i\geq 1$, $ \E{\abs{\eps_i}^{p+\delta}}\leq C_1$, which is more restrictive than 
 our result, since only boundedness of the sequence of moments of order $p$ is required.
\end{enumerate}

\subsection{Baum-Katz estimates for martingale differences sequences and
orthomartingale differences 
random fields}

\subsubsection{Martingale differences sequences}
\label{subsubsec:large_deviations_martingales}

For $p>1$, we denote by $\mathbb L^{p,\infty}$ (respectively 
$\mathbb L^{p,\infty}_0$) the set of random variables 
$X$ such that $\sup_{t>0}t^p\PP\ens{ \norm{X}>t}<+\infty$
(respectively $\lim_{t\to +\infty}t^p\PP\ens{ \norm{X}>t}=0$). 
We also write $\mathbb L^p\log^q\mathbb L$ (with $q\geq 0$) the set of random variables 
$X$ such that $\E{\norm{X}^p\pr{\log_+\norm{X} }^q     }$ is finite, where 
$\log_+\pr{x}:=\max\ens{0,\log\norm{x} }$.

\begin{Theorem}\label{thm:large_deviation_martingale_stationary_p_leq_2}
Let $B$ be an $r$-smooth Banach space for $1<r\leq 2$. Let $\pr{X_i}_{i\geq 1}$ be a martingale 
differences sequence with values in $B$. Assume that one of the following conditions is satisfied:
\begin{enumerate}[label=(C.\arabic*)]
\item\label{itm:condition_Baum_Katz_r_lisse_dom} there exists a real valued random variable $X$ in $\mathbb 
L^r$ such that $\pr{X_i}_{i\geq 1}
\prec X$ and there exists an identically distributed sequence $\pr{V_i}_{i\geq 1}$ such that for all $i$, 
$\E{\norm{X_i}^r\mid \F_{i-1}  } \leq V_i$ a.s. and $V_i\in \mathbb L\log\mathbb L$.
\item\label{itm:condition_Baum_Katz_r_lisse_id_distri} The sequence $\pr{\norm{X_i}}_{i\geq 1}$ is identically 
distributed and $X_1\in \mathbb L^r\log \mathbb L$.
\end{enumerate} 
Then for each $\alpha \in (1/r, 1]$ and each positive $x$, the series
$\sum_{n=1}^{+\infty}n^{r\alpha -2}\PP\ens{\max_{1\leq i\leq n}\norm{S_i}>n^{\alpha}x}$ 
 converges.
\end{Theorem}

Let us compare this result with a previous one.
In \cite{MR2743029}, convergence of the series 
\begin{equation*}
\sum_{n=1}^{+\infty}n^{p\alpha -2}\PP\ens{\max_{1\leq i\leq n}\norm{S_i}>n^{\alpha}x}
\end{equation*}

 have been established for sequences satisfying $\pr{X_i}_{i\geq 1}
\prec X$ and $X\in\mathbb L^p$ for $1<p<r$ and $1\leq\alpha\leq p$.
Our result deal with a more restrictive class of martingale differences but covers the case $p=r$.

When a moment of order greater than two is finite, we can formulate precise 
results in terms of integrability of the increments and of the conditional 
variance term.

\begin{Theorem}\label{thm:large_deviation_martingale_stationary_p_geq_2}
Let $p>2$, $1/2<\alpha\leq 1$ and let $B$ be a
separable $2$-smooth Banach space. There exists 
 a constant $C\prt{p,B}$ such that the following holds:
for each $B$-valued martingale 
differences sequence
$\prt{X_i, \F_i}$ such that $\pr{X_i}\prec X$ and such that there exists an identically distributed sequence 
$\pr{V_i}_{i\geq 1}$ for which $\E{X_i^2\mid \F_{i-1}}\leq V_i$,
\begin{enumerate}
 \item \label{itm:weak_moment}
 if $X\in \mathbb L^{p/2+1,\infty}$ and 
 $V_i\in \mathbb L^{p/2,\infty}$, then for each $x>0$, 
 \begin{multline}
 \sup_{n\geq 1} 
 n^{p\pr{\alpha -1/2}}\PP\ens{\max_{1\leq i\leq n}\norm{S_i}>n^{\alpha}x}
\\  \leq C\prt{p,B}\prt{\sup_{t>0}t^{p/2+1}\PP\ens{ \norm{X_1}>t}x^{-p/2-1}+ 
  \sup_{t>0}t^{p/2}\PP\ens{V_1>t}x^{-p}
  };
 \end{multline}
 \item\label{itm:weak_moment_bis} if $X_1 \in \mathbb L_0^{p/2+1,\infty}$ and 
 $V_1\in \mathbb L^{p/2,\infty}$, then for each 
 $x>0$,
 \begin{equation}
  \lim_{n \to +\infty}n^{p\pr{\alpha -1/2}}\PP\ens{\max_{1\leq i\leq n}\norm{S_i}>n^{\alpha}x}=0;
 \end{equation}
 \item\label{itm:strong_moment} if $X_1\in\mathbb L^{p/2+1}$ and $V_1
 \in \mathbb L^{p/2}$, then 
 \begin{equation}
  \sum_{n=1}^{+\infty}n^{p\pr{\alpha -1/2}-1}\PP\ens{\max_{1\leq i\leq n}\norm{S_i}>n^{\alpha}x}
  \leq C\prt{p,B} x^{-p}
  \prt{\norm{X_1}_p^p+\norm{V_1}_{p/2}^{p/2}} .
 \end{equation}
\end{enumerate}
\end{Theorem}

We can formulate an analogous result for "norm-identically" distributed sequences.

\begin{Theorem}\label{thm:large_deviation_id_distrib_p_geq_2}
Let $p>2$, $1/2<\alpha\leq 1$ and let $B$ be a
separable $2$-smooth Banach space. There exists 
 a constant $C\prt{p,B}$ such that the following holds:
for each $B$-valued martingale 
differences sequence
$\prt{X_i, \F_i}$ such that $\pr{\norm{X_i}}_{i\geq 1}$ is identically distributed,
\begin{enumerate}
 \item \label{itm:weak_moment_id_distrib}
 if $X_1\in \mathbb L^{p,\infty}$ then for each $x>0$, 
 \begin{equation}
 \sup_{n\geq 1} 
 n^{p\pr{\alpha -1/2}}\PP\ens{\max_{1\leq i\leq n}\norm{S_i}>n^{\alpha}x}
  \leq C\prt{p,B} \sup_{t>0}t^{p}\PP\ens{ \norm{X_1}>t}x^{-p/2-1} ;
 \end{equation}
 \item\label{itm:weak_moment_bis_id_distrib} if $X_1 \in \mathbb L_0^{p,\infty}$ then for each 
 $x>0$,
 \begin{equation}
  \lim_{n \to +\infty}n^{p\pr{\alpha -1/2}}\PP\ens{\max_{1\leq i\leq n}\norm{S_i}>n^{\alpha}x}=0;
 \end{equation}
 \item\label{itm:strong_moment_id_distrib} if $X_1\in\mathbb L^{p}$ then 
 \begin{equation}
  \sum_{n=1}^{+\infty}n^{p\pr{\alpha -1/2}-1}\PP\ens{\max_{1\leq i\leq n}\norm{S_i}>n^{\alpha}x}
  \leq C\prt{p,B} x^{-p}\norm{X_1}_p^p.
 \end{equation}
\end{enumerate}
\end{Theorem}

For condition~\ref{itm:condition_Baum_Katz_r_lisse_id_distri} to be satisfied, we require 
  $X_1\in\mathbb L^2 \log \mathbb L$ rather than 
in $\mathbb L^2$. One may wonder whether that stronger condition is really needed. 
Its "necessity" for stationary martingale differences sequences is proved below. It also show 
that the result of Theorems~\ref{thm:large_deviation_martingale_stationary_p_geq_2} and 
\ref{thm:large_deviation_id_distrib_p_geq_2} do not hold for $p=2$.

Recall that $\pr{X,\Sigma,\PP,\theta}$ is a dynamical system if $\pr{X,\Sigma,\PP}$ is a probability 
space and $\theta\colon X\to X$ is a measurable map such that $\PP\pr{\theta^{-1}A}=
\PP\pr{A}$ for all $A\in\Sigma$. If $f\colon\Omega\to \R$ is measurable, then the 
sequence $\pr{f\circ \theta^i}_{i\geq 1}$ is strictly stationary.

We start with the following lemma. 

\begin{Lemma}\label{lem:lemme-tech}
Let $\gamma>1$. There exist a dynamical system $\pr{X,\Sigma,\PP,\theta}$ and a non-negative measurable function 
$f\geq 0$ on $X$ such that, for  every $0<\varepsilon\leq 1$, $\int_Xf \pr{\log^+\pr{f}}^{1-\varepsilon}d\PP<\infty$ and 
\begin{equation}\label{eq:test}
\sum_{n\geq 0} 2^{n\pr{\gamma -1}} \PP\ens{  \sum_{i=0}^{2^n-1} f\circ\theta^{i}> 
2^{n\gamma }}=+\infty.
 \end{equation}
\end{Lemma}

\begin{Proposition}\label{prop:optimalite_cas_reel}
Let $\alpha>1/2$. There exists a stationary (and ergodic) sequence 
of martingale differences $\pr{X_i}_{i\geq 1}$ such that for every $0<\varepsilon\leq1$,
 $\E{X_1^2\pr{\log^+\abs{X_1}}^{1-\varepsilon}}<\infty$ and the series
$\sum_{n\geq 1} 2^{n\pr{2\alpha-1}}
\PP\ens{\abs{\sum_{i=1}^{2^n}X_i}> 2^{n\alpha}}$ diverges.
\end{Proposition}

In \cite{MR3194512}, Baum-Katz type estimates have been formulated for martingales differences 
arrays, extending the results in \cite{MR1084974}. It has been extended to the Banach space valued 
setting in \cite{MR3132544}. However, it seems that our results cannot be compared with those of 
\cite{MR3194512} because these ones require a control in of the $\mathbb{L}^p$-norm of $n^{-1}
\sum_{i=1}^n\E{\abs{X_i}^\gamma\mid \F_{i-1}}$.

\begin{Remark}
A similar statement holds when $n^{p\pr{\alpha -1/2}-1}\PP\ens{\max_{1\leq i\leq n}\norm{S_i}
>n^{\alpha}x}$ is replaced  by $2^{n\pr{p\pr{\alpha -1/2}}}\PP\ens{\max_{1\leq i\leq 2^n}
\norm{S_i}>2^{n\alpha}x}$ in item (3) of Theorems~~
\ref{thm:large_deviation_martingale_stationary_p_leq_2} and 
\ref{thm:large_deviation_id_distrib_p_geq_2}
 In view of Theorem~3.7 in \cite{MR1856684} when $\alpha=1$, the weight 
 $2^{np/2}$ is optimal for 
 stationary ergodic martingale differences sequences in the following sense: if 
 $\prt{R_n}_{n\geq 1}$ is a sequence of real numbers with goes to infinity and $p>2$, then 
 there exists a stationary martingale differences sequence $\prt{X_i}_{i\geq 0}$ 
 such that $X_1$ belongs to $\mathbb L^p$ but the sequence 
 $\prt{2^{np/2}R_n\PP\ens{\max_{1\leq i\leq 2^n}\abs{S_i}>2^n}}_{n\geq 1}$ 
 does not converge to $0$.
 \end{Remark}

\subsubsection{Orthomartingale differences 
random fields}
\label{subsubsec:OMDRF}

Let $\pr{X_{\gr{i}}}_{\gr{i}\in\Z^d}$ be an i.i.d. real-valued random field. Theorem~4.1 in 
\cite{MR0494431} gives the equivalence between the following two assertions for 
$\alpha>1/2$ and $p\geq \max\ens{1/\alpha,1}$:
\begin{enumerate}
\item $X_{\gr{1}}$ belongs to $\mathbb L^p\log^{d-1}\mathbb L$;
\item for each positive $\varepsilon$, 
\begin{equation} 
\sum_{\gr{n}\in\N^d}\abs{\gr{n}}^{p\alpha-2}\PP\ens{\max_{\gr{1}\imd \gr{i}\imd\gr{n}}
\norm{S_{\gr{i}}}>\varepsilon \abs{\gr{n}}^{\alpha}}<+\infty.
\end{equation}
\end{enumerate}

Deviation inequalities has been used in \cite{MR2794415,MR3451971} for 
the question of complete convergence of orthomartingale differences random fields.

Similar results as in Subsubsection~\ref{subsubsec:large_deviations_martingales} 
can be proved for some orthomartingale differences random fields.

\begin{Theorem}\label{thm:large_deviation_orthomartingale_stationary_p_leq_2}
 Let $B$ be a separable $r$-smooth Banach space. For each $B$-valued orthomartingale differences 
 random field $\pr{X_{\gr{i}}}_{\gr{i}\in \Z^d}$ satisfying \eqref{eq:stationarity_of_sums} and 
 such that $X_{\gr{1}}\in \mathbb L^r\log^d\mathbb L$, for each positive $\varepsilon$ and each 
 $\alpha\in \left(1/r,1\right]$, 
\begin{equation}\label{eq:Baum_Katz_champs}
\sum_{\gr{n}\in\N^d}\abs{\gr{n}}^{r\alpha-2}\PP\ens{\max_{\gr{1}\imd \gr{i}\imd\gr{n}}
\norm{S_{\gr{i}}}>\varepsilon \abs{\gr{n}}^{\alpha}}<+\infty.
\end{equation}
\end{Theorem}
\begin{Remark}
One could also formulate the corresponding result where $r$ is replaced in \eqref{eq:Baum_Katz_champs} by $1<p<r$. But this could be established in a more general context 
than ours, namely, that of stochastically dominated orthomartingale differences random fields, 
by using truncation arguments like in \cite{MR2743029}. 
\end{Remark}

\begin{Theorem}\label{thm:large_deviation_orthomartingale_stationary_p_geq_2}
 Let $B$ be a separable $2$-smooth Banach space and $p>2$. For each $B$-valued orthomartingale 
 differences random field $\pr{X_{\gr{i}}}_{\gr{i}\in \Z^d}$ 
 satisfying \eqref{eq:stationarity_of_sums} and 
 such that $X_{\gr{1}}\in \mathbb L^p\log^{d-1}\mathbb L$, for each positive $\varepsilon$ and 
 each $\alpha\in \left(1/2,1\right]$, 
\begin{equation}\label{eq:Baum_Katz_champs_pgeq_2}
\sum_{\gr{n}\in\N^d}\abs{\gr{n}}^{p\pr{\alpha-1/2}-1}\PP\ens{\max_{\gr{1}\imd \gr{i}\imd\gr{n}}
\norm{S_{\gr{i}}}>\varepsilon \abs{\gr{n}}^{\alpha}}<+\infty.
\end{equation}
\end{Theorem}

\begin{Remark}
One hand the results in \cite{MR3451971}, we do not require boundedness of 
the conditional moments. On the other hand, their result do not require 
that $\pr{\abs{X_{\gr{i}}}}_{\gr{i}\in\Z^d}$ is identically distributed hence the 
results are not directly comparable.
\end{Remark}

\section{Proofs}
\label{sec:proofs}

\subsection{Proofs of Theorems~\ref{thm:deviation_inequality_non_stationary_martingale_diff} 
and \ref{thm:deviation_inequality_stationary_martingale_diff}}

\begin{proof}[Proof of Theorem~\ref{thm:deviation_inequality_non_stationary_martingale_diff}]
 We first start by a distribution function inequality, which was first established in the 
 real valued case and $r'=2$ in \cite{MR0365692} (see
 also \cite{MR0394135}, p.~24 for a proof). 
 
 \begin{Lemma}\label{lem:functional_inequality}
  Let $\pr{B,\norm{\cdot}}$ be an $r$-smooth Banach space for some $1<r\leq 2$ and let 
  $1<r'\leq r$. Then for any $\delta\in (0,1)$ and any $B$-valued martingale differences 
  sequence $\pr{X_i}_{i\geq 1}$ with respect to the filtration $\pr{\F_i}_{i\geq 1}$, the 
  following inequality holds for any $n\geq 1$ and $x>0$:
  \begin{multline}\label{eq:inegalite_queue_intermediaire}
   \PP\ens{\max_{1\leq i\leq n}\norm{S_i}>2x}
   \leq C_{r',B}\pr{\frac{\delta}{1-\delta}}^{r'}\PP\ens{\max_{1\leq i\leq n}\norm{S_i}>x}\\
   + \PP\ens{\max_{1\leq i\leq n}\norm{X_i}>\delta x}+
    \PP\ens{\pr{\sum_{i=1}^n\E{\norm{X_i}^{r'}\mid \F_{i-1}}}^{1/r'}>\delta x},
  \end{multline}
where $C_{r',B}$ is defined by \eqref{eq:r'_smooth_martingale} and $S_n=\sum_{i=1}^nX_i$.

 \end{Lemma}
\begin{proof}
We assume that $n\geq 2$ since for $n=1$, the result is obvious.
We define $A_1=B_1=C_1=\emptyset$ and for $2\leq i\leq n$, 
\begin{equation}
 A_i:=\ens{\max_{1\leq u\leq i-1}\norm{S_u}\in (x,2x)},
\end{equation}
\begin{equation}
 B_i:=\ens{\max_{1\leq u\leq i-1}\norm{X_u}\leq \delta x},
\end{equation}
\begin{equation}
 C_i:=\ens{\sum_{u=1}^i  \E{\norm{X_i}^{r'} \mid \F_{u-1}}\leq \pr{\delta x}^{r'}}.
\end{equation}
We then introduce 
\begin{equation}
 Y_i:=\mathbf{1}_{A_i}\mathbf{1}_{B_i}\mathbf{1}_{C_i}X_i,\quad 1\leq i\leq n.
\end{equation}
We show that the following inclusion holds:
\begin{multline}\label{eq:inclusion_cle}
\ens{ \max_{1\leq i\leq n}\norm{S_i}>2x}\cap \ens{\max_{1\leq i\leq n}\norm{X_i}\leq \delta x}
 \cap \ens{\pr{\sum_{i=1}^n\E{\norm{X_i}^{r'}\mid \F_{i-1}}}^{1/r'}\leq \delta x}\\
 \subset \ens{\norm{\sum_{i=1}^nY_i}> \pr{1-\delta}x  }.
\end{multline}
Indeed, let $\omega$ be an element of the left hand side of \eqref{eq:inclusion_cle}. Then 
for any $i\in\ens{2,\dots,n}$, $\omega$ belongs to $B_i\cap C_i$. Consequently, 
$\sum_{i=1}^nY_i\pr{\omega}= \sum_{i=2}^n\mathbf{1}_{A_i}X_i\pr{\omega}$. Let $I:=
\ens{i\in\ens{2,\dots,n}: \omega\in A_i}$. Let 
$M_i:=\max_{1\leq u\leq i}\norm{S_i}$. Note that 
$\norm{S_1\pr{\omega}}=\norm{X_1\pr{\omega}}\leq \delta x<x$, since $0<\delta<1$ 
hence $M_1<x$ and $M_n>2x$. Since for any $i\in\ens{1,\dots,n-1}$ we have $\norm{X_i\pr{\omega}}
\leq \delta x$, it follows that
$0\leq M_{i+1}-M_i\leq \delta x$. Consequently, $I$ is of the form $\ens{i, i_0\leq i\leq j_0}$ 
for some integers $i_0\geq 2$ and $j_0\leq n$. Therefore, 
\begin{equation}
 \norm{\sum_{i=1}^nY_i\pr{\omega}}= \norm{\sum_{i=i_0}^{j_0}X_i\pr{\omega}}
 \geq \norm{\sum_{i=1}^{j_0}X_i\pr{\omega}}-\norm{\sum_{i=1}^{i_0-1}X_i\pr{\omega}}
 -\norm{X_{i_0}\pr{\omega}}.
\end{equation}
Now, \eqref{eq:inclusion_cle} holds in view of the inequalities $
\norm{\sum_{i=1}^{j_0}X_i\pr{\omega}}>2x$, $\norm{\sum_{i=1}^{i_0-1}X_i\pr{\omega}}
\leq x$ (since $\omega\in I_{i_0}$) and $\norm{X_{i_0}\pr{\omega}}\leq \delta x$.

Taking the probabilities on both sides in \eqref{eq:inclusion_cle}, one gets 
\begin{multline}\label{eq:inclusion_cle_bis}
\PP\pr{\ens{ \max_{1\leq i\leq n}\norm{S_i}>2x}\cap \ens{\max_{1\leq i\leq n}\norm{X_i}\leq \delta x}
 \cap \ens{\pr{\sum_{i=1}^n\E{\norm{X_i}^{r'}\mid \F_{i-1}}}^{1/r'}\leq \delta x}}\\
 \leq \PP\ens{\norm{\sum_{i=1}^nY_i}^{r'}> \pr{\pr{1-\delta}x}^{r'}  }
 \leq  \pr{\pr{1-\delta}x}^{r'}  \E{\norm{\sum_{i=1}^nY_i}^{r'}}.
\end{multline}
Observe that $A_i$, $B_i$ and $C_i$ belong to $\F_{i-1}$, hence $\pr{Y_i}_{i\geq 1}$ is a martingale 
differences sequence. The combination of \eqref{eq:inclusion_cle_bis} with 
\eqref{eq:r'_smooth_martingale} yields 
\begin{multline}\label{eq:inclusion_cle_ter}
\PP\pr{\ens{ \max_{1\leq i\leq n}\norm{S_i}>2x}\cap \ens{\max_{1\leq i\leq n}\norm{X_i}\leq \delta x}
 \cap \ens{\pr{\sum_{i=1}^n\E{\norm{X_i}^{r'}\mid \F_{i-1}}}^{1/r'}\leq \delta x}}\\
 \leq  \pr{\pr{1-\delta}x}^{-r'} C_{r',B}\sum_{i=1}^n\E{\norm{Y_i}^{r'}}.
\end{multline}
Since for $i\geq 2$, 
\begin{equation}
 \E{\norm{Y_i}^{r'}}=\E{\E{\norm{Y_i}^{r'}}\mid \F_{i-1}}
 =\E{\mathbf{1}_{A_i}\mathbf{1}_{B_i}\mathbf{1}_{C_i}\E{\norm{X_i}^{r'}\mid \F_{i-1}}},
\end{equation}
we derive that 
\begin{equation}\label{eq:inclusion_cle_4}
 \E{\norm{Y_i}^{r'}}\leq \E{\mathbf{1}\ens{\max_{1\leq i\leq n}\norm{S_i}>x    }
 \mathbf{1}_{C_i}\E{\norm{X_i}^{r'}\mid \F_{i-1}}}.
\end{equation}
Observe that 
\begin{equation}\label{eq:inegalite_variances_cond}
 \sum_{i=2}^n\mathbf{1}_{C_i}\E{\norm{X_i}^{r'}\mid \F_{i-1}}\leq \pr{\delta x}^{r'}.
\end{equation}

Indeed, if $Z_i$ are non-negative random variables, $Z'_i:=\sum_{u=1}^iZ_u$
and $E_i=\ens{Z'_u\leq t}$, we have
 
\begin{align*}
 \sum_{i=2}^nZ_i\mathbf 1_{E_i}&=\sum_{i=2}^n\pr{Z'_i-Z'_{i-1}}\mathbf 1_{E_i}\\
 &=\sum_{j=2}^n Z'_j\mathbf 1_{E_j}-\sum_{j=1}^{n-1}Z'_j\mathbf 1_{E_{j+1}}\\
 &=Z'_n\mathbf 1_{E_n}+\sum_{j=2}^{n-1}Z'_j\pr{\mathbf 1_{E_j}-\mathbf 1_{E_{j+1}}}-Z'_1
 \mathbf 1_{E_2},
\end{align*}
since $E_{j+1}\subset E_j$, 
the second term is smaller than 
$\sum_{j=2}^{n-1}t\pr{\mathbf 1_{E_j}-\mathbf 1_{E_{j+1}}}=t \mathbf 1_{E_2}-
t \mathbf 1_{E_n}$ and consequently, 
\begin{equation}
 \sum_{i=2}^nZ_i\mathbf 1_{E_i}\leq \pr{Z'_n-t}\mathbf 1_{E_n}+t\mathbf 1_{E_2}-Z'_1
 \mathbf 1_{E_2}\leq t.
\end{equation}
Combining \eqref{eq:inclusion_cle_ter}, \eqref{eq:inclusion_cle_4} and 
\eqref{eq:inegalite_variances_cond}, we get \eqref{eq:inegalite_queue_intermediaire}. This ends 
the proof of Lemma~\ref{lem:functional_inequality}.
\end{proof}
 Let us define the functions 
 \begin{equation}
  f\colon x\mapsto \PP\ens{\max_{1\leq i\leq n}\norm{S_i}>x} \mbox{ and }
  \end{equation}
  \begin{equation}
  g\colon x\mapsto \PP\ens{\max_{1\leq i\leq n}\norm{X_i}> x}+
    \PP\ens{\pr{\sum_{i=1}^n\E{\norm{X_i}^{r'}\mid \F_{i-1}}}^{1/r'}> x}.
 \end{equation}
We established in Lemma~\ref{lem:functional_inequality} that for any $x>0$ and any 
$\delta\in (0,1)$, 
\begin{equation}
 f\pr{2x}\leq C_{r',B}\pr{\frac{\delta}{1-\delta}}^{r'}f\pr{x}+g\pr{\delta x}.
\end{equation}
Let $q>0$ be fixed and $\eta:= C_{r',B}\pr{\frac{\delta}{1-\delta}}^{r'}$. Let $t>0$ be fixed, 
$a_n:=f\pr{2^nt}$, $b_n:=\eta^{-n}a_n$ and $c_n:=g\pr{2^nt\delta}$. Then 
\begin{equation}
 b_{n+1}=\eta^{-n-1}a_{n+1}\leq \eta^{-n-1}\pr{\eta a_n+c_n}=
 b_n+\eta^{-n-1}c_n.
\end{equation}
Consequently, 
\begin{equation}
 b_N=b_0+\sum_{n=0}^{N-1}b_{n+1}-b_n\leq a_0+\sum_{n=0}^{N-1}\eta^{-n-1}c_n,
\end{equation}
which gives 
\begin{equation}
 a_N\leq a_0\eta^N+\sum_{n=0}^{N-1}\eta^{N-n-1}c_n,
\end{equation}
and with the change of index $j=N-n$, we derive that for any positive $t$ and any 
integer $N$, 
\begin{equation}\label{eq:functional_inequality_etape_intermediaire}
 f\pr{2^Nt}\leq  f\pr{t}\eta^N+\sum_{j=1}^{N}\eta^{j-1}g\pr{\delta 2^{N-j}t}.
\end{equation}
Now, we choose $\delta:=2^{-1-q/r'}C_{r',B}^{-1/r'}$, which is smaller than $1$, 
as $C_{r',B}$ is bigger than $1$.
Applying \eqref{eq:functional_inequality_etape_intermediaire} with $x=2^Nt$ and letting $N$ going to infinity (accounting 
$f\pr{2^{-N}x}\leq 1$ and $0<\eta<1$), we get 
\begin{equation}
 f\pr{x}\leq  \sum_{j=1}^{+\infty}\eta^{j-1}g\pr{\delta 2^{-j}x}.
\end{equation}
Since the function $g$ is non-increasing, we have 
\begin{equation}
 \int_{2^{-j}}^{2^{-j+1}}g\pr{ux\delta}u^{q-1}\mathrm du
 \geq g\pr{2^{-j}x\delta}\int_{2^{-j}}^{2^{-j+1}}u^{q-1}\mathrm du
 =g\pr{2^{-j}x\delta}\frac{2^q-1}q2^{-jq}
\end{equation}
hence 
\begin{equation}
 f\pr{x}\leq  \frac{q}{2^q-1}\sum_{j=1}^{+\infty}\eta^{j-1}2^{jq}
 \int_{2^{-j}}^{2^{-j+1}}g\pr{ux\delta}u^{q-1}\mathrm du.
\end{equation}
Notice that
\begin{equation}
 \eta\leq C_{r',B}2^{r'}2^{-r'-q}C_{r',B}^{-1}\leq 2^{-q},
\end{equation}
hence 
\begin{equation}
 f\pr{x}\leq  \frac{q}{2^q-1}\eta^{-1} 
 \int_{0}^{1}g\pr{ux\delta}u^{q-1}\mathrm du.
\end{equation}
Since 
\begin{equation}
 \eta^{-1}=\pr{\frac{1-\delta}{\delta}}^{r'}C_{r',B}^{-1}
 \leq \pr{\frac{1}{\delta}}^{r'}C_{r',B}^{-1}\leq 2^{q-r'},
\end{equation}
we get \eqref{eq:deviation_inequality_non_stationary_martingale_diff}. This ends the proof 
of Theorem~\ref{thm:deviation_inequality_non_stationary_martingale_diff}.
\end{proof}

\begin{proof}[Proof of Theorem~\ref{thm:deviation_inequality_stationary_martingale_diff}]

We shall need the following lemma. 
\begin{Lemma}\label{Lemma_weak_type_estimate}
  Assume that $X$ and $Y$ are two 
  non-negative random variables such that for each positive $x$, 
  we have 
  \begin{equation}\label{eq:weak_type_assumption}
   x\PP\ens{X>x}\leqslant\mathbb 
  E\left[Y \mathbf 1\ens{X\geqslant x}\right].
  \end{equation}

  Then for each $t$, the following inequality holds:
  \begin{equation}
   \PP\ens{X>2t}\leqslant \int_1^{+\infty}\PP\ens{Y>st}\mathrm ds.
  \end{equation}

 \end{Lemma}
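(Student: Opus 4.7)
The plan is to prove this by applying the hypothesis with parameter $x$ slightly exceeding $2t$, then unfolding the expectation via the layer-cake formula and finally passing to the limit $x \downarrow 2t$. Observe first that by substituting $u=s/t$ the target inequality is equivalent to
\begin{equation*}
 t \, \PP\{X > 2t\} \leq \int_t^{+\infty}\PP\{Y>s\}\mathrm ds,
\end{equation*}
so this is what I will aim for.

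First, I would fix some $x>2t$ and apply the hypothesis \eqref{eq:weak_type_assumption} at this $x$. Next, since $Y\geq 0$, Fubini (the layer-cake representation) yields
\begin{equation*}
 \E\left[Y\mathbf 1\{X\geq x\}\right]=\int_0^{+\infty}\PP\{Y>s,\,X\geq x\}\,\mathrm ds,
\end{equation*}
and I would split this integral at $s=t$: on $(0,t)$ the integrand is bounded by $\PP\{X\geq x\}$ which gives a contribution at most $t\PP\{X\geq x\}$, while on $(t,+\infty)$ it is bounded by $\PP\{Y>s\}$. Because $x>2t$, the inclusion $\{X\geq x\}\subset \{X>2t\}$ then permits me to replace $\PP\{X\geq x\}$ by $\PP\{X>2t\}$, yielding
\begin{equation*}
 x\,\PP\{X>x\} \leq t\,\PP\{X>2t\}+\int_t^{+\infty}\PP\{Y>s\}\,\mathrm ds.
\end{equation*}

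The final step is to let $x\downarrow 2t$. The survival function $x\mapsto \PP\{X>x\}$ is right-continuous, so the left-hand side converges to $2t\,\PP\{X>2t\}$. Subtracting $t\,\PP\{X>2t\}$ from both sides and performing the change of variable $s=ut$ yields the claim. There is no real obstacle here; the only subtlety is the distinction between $\{X\geq 2t\}$ and $\{X>2t\}$, which is cleanly handled by approaching $2t$ from the right rather than trying to apply the hypothesis directly at $x=2t$.
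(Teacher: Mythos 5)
Your proof is correct and follows essentially the same route as the paper: expand $\E{Y\mathbf 1\ens{X\geqslant x}}$ by the layer-cake formula, split the integral at $s=t$ (bounding the lower part by $t$ times a tail probability of $X$ and the upper part by $\int_t^{+\infty}\PP\ens{Y>s}\mathrm ds$), combine with the hypothesis, and rescale by $s=ut$. The only genuine difference is that you apply the hypothesis at $x>2t$ and then let $x\downarrow 2t$; this is a minor but welcome refinement, since the paper applies the hypothesis directly at $x=2t$ and its bound $2t\PP\ens{X>2t}\leqslant t\PP\ens{X\geqslant 2t}+\int_t^{+\infty}\PP\ens{Y>u}\mathrm du$ strictly speaking leaves an extra term $t\PP\ens{X=2t}$, a subtlety your limiting argument handles cleanly.
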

  \begin{proof}[Proof of Lemma~\ref{Lemma_weak_type_estimate}]
  Rewriting the expectation as
  \begin{equation}\label{weak_type_estimate}
   \mathbb E\left[Y \mathbf 1\ens{X\geqslant 2t}\right]=
   \int_0^{+\infty} \PP\ens{Y \mathbf 1\ens{X\geqslant 2t}>u}\mathrm du
   \leqslant t\PP\ens{X\geqslant 2t}+\int_t^{+\infty}\PP\ens{Y>u}\mathrm du,
  \end{equation}
 we derive by the assumption the bound 
 \begin{equation}
  2t\PP\ens{X>2t} \leqslant t\PP\ens{X\geqslant 2t}+\int_t^{+\infty}\PP\ens{Y>u}\mathrm du.
 \end{equation}
  We conclude using the substitution $ts:=u$.
 \end{proof}

We apply Theorem~\ref{thm:deviation_inequality_non_stationary_martingale_diff}.
The first term of \eqref{eq:deviation_inequality_non_stationary_martingale_diff} 
is controlled in the following way, using the fact that if $U$ has uniform distribution 
on $[0,1]$, then $Q_{\norm{X_i}}\pr{U}$ has the same distribution as $\norm{X_i}$:
\begin{align}
 \int_0^1\PP\ens{\max_{1\leqslant i\leqslant n}
    \norm{X_i}>xun^{1/r'}}u^{q-1}\mathrm du&
 \leq \sum_{i=1}^n \int_0^1\PP\ens{ 
    \norm{X_i}> xun^{1/r'}}u^{q-1}\mathrm du\\
 &=\sum_{i=1}^n \int_0^1\lambda\ens{t\in [0,1],
    Q_{\norm{X_i}}(t)> xun^{1/r'}}u^{q-1}\mathrm du\nonumber\\
 &\leq \sum_{i=1}^n \int_0^1\lambda\ens{t\in [0,1],
    Q_{X}(t)>B_{r',q}xun^{1/r'}}u^{q-1}\mathrm du\nonumber\\
    &=n\int_0^1\PP\ens{X> xun^{1/r'}}u^{q-1}\mathrm du,
\end{align}
where $\lambda$ denotes the Lebesgue measure.

In order to control the second term of \eqref{eq:deviation_inequality_non_stationary_martingale_diff}, we first 
bound $\sum_{i=1}^n
    \E{\norm{X_i}^{r'}\mid \F_{i-1}} $ by $\sum_{i=1}^nV_i$ and 
we notice that for any convex function $\phi\colon\R\to \R$, 
\begin{equation}
 \E{\phi\pr{\frac 1n\sum_{i=1}^n V_i}}
 \leq \frac 1n\sum_{i=1}^n
    \E{\phi\pr{V_i}}=\E{\phi\pr{V_1}}.
\end{equation}

 By Theorem~6 in \cite{MR606989}, there exists a probability space $\pr{\Omega',\mathcal A',\PP'}$ 
 and random variables $Z'_n$ and $Z'$ such that $V'_n$ has the same distribution 
 as $\frac 1n\sum_{i=1}^nV_i$, $Z'$ has the same distribution as $V_1$ and 
 such that $Z'_n=\E{Z'\mid Z'_n}$.
 
Therefore, inequality \eqref{weak_type_estimate} holds 
with $\displaystyle X:=Z'_n$ and
$Y=Z'$,
 hence by Lemma~\ref{Lemma_weak_type_estimate} the estimate 
\begin{equation}\label{eq:maximal_ergodic}
 \PP\ens{\frac 1n\sum_{i=1}^n
    \E{\norm{X_i}^{r'}\mid \F_{i-1} }>2u^{r'}x^{r'}}\leqslant 
 \int_1^{+\infty}\PP\ens{V_1>u^{r'}x^{r'}s}
  \mathrm ds
\end{equation}
 is valid for any $n$. We can deduce from 
 inequalities~\eqref{eq:deviation_inequality_non_stationary_martingale_diff} and
  \eqref{eq:maximal_ergodic} that \eqref{eq:deviation_inequality_stationary_martingale_diff_cond_var} is 
 satisfied after having used the elementary identity 
 \begin{equation}
  \int_0^1\int_1^{+\infty}h\pr{u^{r'}v}\mathrm dvu^{q-1}\mathrm du
  =\frac 1{q-r'}\int_0^{+\infty}h\pr{w}\min\ens{w^{\frac{q-r'}{r'}},1}\mathrm dw
 \end{equation}
with $h\pr{t}:=\PP\ens{V_1  >x^{r'}t/2}$.
 
 In order to prove \eqref{eq:deviation_inequality_stationary_martingale_diff}, we bound the two terms 
of the right hand side of \eqref{eq:deviation_inequality_non_stationary_martingale_diff}
 independently of $n$. 
Let us start by the first term, which can be written as 
\begin{equation}
\displaystyle n^{-q/r'} \int_0^{n^{1/r'}}\PP\ens{\max_{1\leqslant i\leqslant n}
    \norm{X_i}>2^{-1-q/r'}C_{r',B}^{-1/r'}xv}v^{q-1}\mathrm dv. 
\end{equation}
If $v\leqslant 1$, 
we use the bound $n^{1-q/r'}v^{q-1}\leqslant v^{q-1}$ (since $q>r'$). 
If $1<v\leqslant n^{1/r'}$, then  $n^{1-q/r'}v^{q-1}
\leqslant v^{r'-1}$. We thus have 
\begin{multline}\label{eq:bound_term_with_maximums}
 \int_0^1\PP\ens{\max_{1\leqslant i\leqslant n}
    \norm{X_i}>2^{-1-q/r'}C_{r',B}^{-1/r'}n^{1/r'}xu}u^{q-1}\mathrm du\\
 \leq 
 \int_0^{+ \infty} \PP\ens{\norm{X_1}> 2^{-1-q/r'}C_{r',B}^{-1/r'}xv}\min\ens{v^{q-1},v^{r'-1}}\mathrm dv.
 \end{multline}
 
Let us treat the second term. For any convex function $\phi\colon\R\to \R$, 
\begin{align*}
 \E{\phi\pr{\frac 1n\sum_{i=1}^n \E{\norm{X_i}^{r'}\mid \F_{i-1}  }   }}
 &\leq \frac 1n\sum_{i=1}^n
    \E{\phi\pr{\E{\norm{X_i}^{r'}\mid \F_{i-1}  }}} \\
    & \leq \frac 1n\sum_{i=1}^n
    \E{\E{\phi\pr{\norm{X_i}^{r'} }\mid \F_{i-1} }}=\E{\phi\pr{\norm{X_1}^{r'}  }}.
\end{align*}
Using again Theorem~6 in \cite{MR606989}, we derive that 
\begin{multline}\label{eq:control_conditional_variances_identically_distributed}
 \int_0^1\PP\ens{
    \pr{\sum_{i=1}^n
    \E{\norm{X_i}^{r'}\mid \F_{i-1}}}^{1/r'}>2^{-1-q/r'}C_{r',B}^{-1/r'}xu}u^{q-1}\mathrm du
 \\
 \leq     \int_0^1\int_1^{+\infty}\PP\ens{
    \norm{X_1} >2^{-1-q/r'}C_{r',B}^{-1/r'}xu}u^{q-1}\mathrm du.
\end{multline}
 
 Combining 
 \eqref{eq:bound_term_with_maximums} and \eqref{eq:control_conditional_variances_identically_distributed}, we
  get \eqref{eq:deviation_inequality_stationary_martingale_diff}. This 
 ends the proof of Theorem~\ref{thm:deviation_inequality_stationary_martingale_diff}.
\end{proof}

\subsection{Proof of Theorem~\ref{thm:deviation_inequality_stationary_orthomartingale_diff}}

 Let us prove \eqref{eq:deviation_inequality_stationary_orthomartingale_diff}. Let $B$ be a 
 separable $r$-smooth Banach space and let $r'\in (1,2]$, $q>r'$ be fixed.
 
 \begin{Lemma}\label{lem:Lemma_proof_inequality_orthomatingales}
Let $\left(f_d\right)_{d\geq 1}$ be a sequence of functions from $\left(0,+\infty\right)$ to ifself 
such that:
\begin{enumerate}
\item\label{itm:first_property_function_fd} for any martingale differences sequence $\left(X_i\right)_{i\geq 1}$ with values in 
$B$ such that $\left(\norm{X_i}\right)_{i\geq 1}$ is identically distributed
 and $\E{\norm{X_1}^{r'}}<+\infty$, any $n\geq 1$
and any positive $x$,
\begin{equation}
\PP\ens{\max_{1\leq i\leq n}\norm{S_i}>n^{1/r'}x}\leq \int_{0}^{+\infty}
\PP\ens{\norm{X_1}>xv   }f_1\pr{v}\mathrm dv,
\end{equation}
where $S_n=\sum_{i=1}^nX_i$;
\item\label{itm:second_property_function_fd} for any $d\geq 2$ and any positive $w$,
\begin{equation}\label{eq:recurrence_fd}
f_d\pr{w}\geq\int_0^{+\infty}\int_0^{+\infty}f_{d-1}\pr{u}f_{1}\pr{u'}\frac{1}{uu'}
\mathbf{1}\ens{uu'\leq w}\mathrm du\mathrm du'.
\end{equation}
\end{enumerate}
Then for any integer $d\geq 1$, any orthomartingale differences random field
$\pr{X_{\gr{i}}}_{\gr{i}\in \Z^d}$ satisfying \eqref{eq:stationarity_of_sums}, 
$\E{\norm{X_\gr{1}}^{r'}}<+\infty$, any $\gr{n}\smd \gr{1}$ and any positive $x$, 
\begin{equation}\label{eq:inegalite_dans_HDR}
\PP\ens{\max_{\gr{1}\imd \gr{i}\imd \gr{n}} 
\norm{S_{\gr{i}}}> 2^{d-1}x\abs{\gr{n}}^{1/r'}}
\leq \int_{0}^{+\infty}
\PP\ens{\norm{X_{\gr{1}}}>xv   }f_d\pr{v}\mathrm dv,
\end{equation}
where $S_{\gr{i}}$ is defined by \eqref{eq:definitio_of_partial_sums}.
 \end{Lemma}
 
 For $p>0$ and $k\in \N$, let 
 \begin{equation}
 a_{p,k}:=\int_0^1t^{p-1}\pr{1+\abs{\log t}}^{k}\mathrm{dt}.
 \end{equation}
\begin{Lemma}\label{lem:Lemma2_proof_inequality_orthomatingales}
Let $\pr{c_d}_{d\geqslant 1}$ be the sequence of real numbers such that 
\begin{equation}
c_1=\frac{2^{q+1}}{2^q-1}\frac{q2^{-r'}}{q-r'}\mbox{ and }
\end{equation} 
\begin{equation}
 c_{d}=c_{d-1}\pr{1+a_{q-r',d-2}
+2^{d-1}a_{q-r',d-2}}.
 \end{equation}
 Then the sequence of functions $\left(f_d\right)_{d\geq 1}$ defined by  
 \begin{equation}
 f_d\colon u\mapsto  c_d \min\ens{u^{q-1},u^{r'-1}}
 \pr{1+\abs{\log u}}^{d-1}, \quad u>0
 \end{equation}
 satisfies the conditions of Lemma~\ref{lem:Lemma_proof_inequality_orthomatingales}.
\end{Lemma}

Inequality~\eqref{eq:deviation_inequality_stationary_orthomartingale_diff} is a
direct consequence of Lemmas~\ref{lem:Lemma_proof_inequality_orthomatingales} and 
\ref{lem:Lemma2_proof_inequality_orthomatingales}. 

\begin{proof}[Proof of Lemma~\ref{lem:Lemma_proof_inequality_orthomatingales}]
The proof is done by induction on $d$. The case $d=1$ is contained in the assumptions. 
Assume that inequality \eqref{eq:inegalite_dans_HDR} holds for some $d\geq 1$ for any 
orthomartingale differences random field $\pr{X_{\gr{i}}}_{\gr{i}\in \Z^d}$ satisfying \eqref{eq:stationarity_of_sums}, 
$\E{\norm{X_\gr{1}}^{r'}}<+\infty$, any $\gr{n}\smd \gr{1}$ and any positive $x$. 

Let  $\pr{X_{\gr{i}}}_{\gr{i}\in \Z^{d+1}}$ be an orthomartingale 
differences random field with respect to the commutatitve filtration 
$\pr{\F_{\gr{i}}}_{\gr{i}\in \Z^{d+1}}$ satisfying \eqref{eq:stationarity_of_sums}. 
Using property~\ref{submartingale} in Lemma~\ref{lem:properties_orthomartingales} 
and Lemma~\ref{Lemma_weak_type_estimate} applied to 
\begin{equation}
X=\max_{\substack{1\leq i_k\leq n_k\\ 1\leq k\leq d+1}   } 
\norm{S_{\gr{i}}}\mbox{ and }Y=\max_{\substack{1\leq i_k\leq n_k\\ 1\leq k\leq d}   } 
\norm{S_{\gr{i},n_{d+1}}},
\end{equation}
we get 
\begin{equation}
\PP\ens{\max_{\gr{1}\imd \gr{i}\imd \gr{n}} 
\norm{S_{\gr{i}}}> 2^dx\abs{\gr{n}}^{1/r'}}
\leq \int_{1}^{+\infty}
\PP\ens{\max_{\substack{1\leq i_k\leq n_k\\ 1\leq k\leq d}   } 
\norm{S_{\gr{i},n_{d+1}}} >2^{d-1}x\abs{\gr{n}}^{1/r'}v   }\mathrm dv.
\end{equation}
We now apply the induction hypothesis to  
$ \widetilde{X_{\gr{i}}} := \sum_{k=1}^{n_{d+1}} X_{\gr{i},k} $, 
$\widetilde{\F_{\gr{i}}}:=  \F_{\gr{i},n_{d+1}}$ 
and $\widetilde{x}:=xn_{d+1}^{1/r'}$ to get 
\begin{equation}
\PP\ens{\max_{\gr{1}\imd \gr{i}\imd \gr{n}} 
\norm{S_{\gr{i}}}> 2^dx\abs{\gr{n}}^{1/r'}}
\leq \int_{1}^{+\infty}\int_0^{+\infty}
\PP\ens{ \norm{\sum_{k=1}^{n_{d+1}} X_{\gr{1},k}}
 >xn_{d+1}^{1/r'}vu   }f_{d-1}\pr{u}\mathrm du\mathrm dv.
\end{equation}
After having appylied the one dimensional case, we derive that 
\begin{equation*}
\PP\ens{\max_{\gr{1}\imd \gr{i}\imd \gr{n}} 
\norm{S_{\gr{i}}}> 2^dx\abs{\gr{n}}^{1/r'}}\\
\leq \int_{\pr{0,+\infty}^3} 
\PP\ens{ \norm{ X_{\gr{1}}}
 >x vu u'  }f_{d-1}\pr{u}f_{1}\pr{u'}\mathbf 1\ens{v>1}\mathrm dv\mathrm du\mathrm du'.
\end{equation*}
and the substitution $w:=vu u'$ for fixed $u$ and $u'$ combined with \eqref{eq:recurrence_fd}
end the proof of Lemma~\ref{lem:Lemma_proof_inequality_orthomatingales}.
\end{proof}

\begin{proof}[Proof of Lemma~\ref{lem:Lemma2_proof_inequality_orthomatingales}]
Item~\ref{itm:first_property_function_fd} follows from Theorem~\ref{thm:deviation_inequality_stationary_martingale_diff} after a substitution in the integral of the 
right hand side of \eqref{eq:deviation_inequality_stationary_martingale_diff}. 

Let us show item~\ref{itm:second_property_function_fd}. Let $d\geq 2$ be fixed. Observe that 
\begin{align*}
\int_0^{+\infty}f_{1}\pr{u'}\frac{1}{u'}\mathbf{1}\ens{uu'\leq w}\mathrm du'
&=c_1\int_0^{w/u}\frac 1v \min\ens{v^{q-1},v^{r'-1}}\mathrm{d}v\\
&\leq c_1\min\ens{\int_0^{w/u}  v^{q-2}\mathrm{d}v,\int_0^{w/u} v^{r'-2}
 \mathrm{d}v}\\
 &\leq \frac{c_1}{r'-1}\min\ens{\pr{\frac{w}u}^{q-1},\pr{\frac{w}u}^{r'-1}},
\end{align*}
hence 
\begin{multline}
\int_0^{+\infty}\int_0^{+\infty}f_{d-1}\pr{u}f_{1}\pr{u'}\frac{1}{uu'}
\mathbf{1}\ens{uu'\leq w}\mathrm du\mathrm du'\\
\leq \frac{c_1}{r'-1}\int_0^{+\infty}f_{d-1}\pr{u}
\min\ens{\pr{\frac{w}u}^{q-1},\pr{\frac{w}u}^{r'-1}}\mathrm{d}u.
\end{multline}
Let $g\colon u\mapsto f_{d-1}\pr{u}u^{-1}
\min\ens{\pr{\frac{w}u}^{q-1},\pr{\frac{w}u}^{r'-1}}$ and 
$
I\pr{w}:=\int_0^{+\infty}g\pr{u}\mathrm{d}u$.
Assume that $w\leq 1$. Spliting the integral into three parts (from $0$ to $w$, from $w$ to $1$ and from $1$ to infinity), we get 
\begin{multline}\label{eq:decomposition_integrale_w_leq_1}
I\pr{w}=c_{d-1}\int_0^w u^{q-1}\pr{1+\abs{\log u}}^{d-2} u^{-1} \pr{\frac{w}u}^{r'-1}
\mathrm{d}u+
c_{d-1}\int_w^{1} u^{q-1}\pr{1+\abs{\log u}}^{d-2} u^{-1}\pr{\frac{w}u}^{q-1}\mathrm{d}u
\\+
c_{d-1}\int_1^{+\infty}u^{r'-1}\pr{1+\abs{\log u}}^{d-2}u^{-1} \pr{\frac{w}u}^{q-1}     \mathrm{d}u
=:c_{d-1}\pr{I_1\pr{w}+I_2\pr{w}+I_3\pr{w}  }.
\end{multline}
Let us bound these integrals. We have 
\begin{align*}
I_1\pr{w}=w^{r'-1}\int_0^w u^{q-r'-1}\pr{1+\abs{\log u}}^{d-2} 
\mathrm{d}u
\end{align*}
and the substitution $x=u/w$ gives 
\begin{equation}
I_1\pr{w}=w^{q-1}\int_0^1 x^{q-r'-1}\pr{1+\abs{\log x}+\abs{\log w} }^{d-2} 
\mathrm{d}x\leq w^{q-1}2^{d-1}a_{q-r',d-2}\pr{1+\abs{\log w}}^{d-2}.
\end{equation}
Observe that for $u\in (w,1)$, 
\begin{equation}
 u^{q-1}\pr{1+\abs{\log u}}^{d-2} u^{-1}\pr{\frac{w}u}^{q-1}
 =w^{q-1}\pr{1+\abs{\log u}}^{d-2} u^{-1}\leq w^{q-1} \pr{1+\abs{\log w}}^{d-2} u^{-1}
\end{equation}
hence 
\begin{equation}
I_2\pr{w}\leq w^{q-1} \pr{1+\abs{\log w}}^{d-1}.
\end{equation}
Finally, 
\begin{equation}
I_3\pr{w}=w^{q-1}\int_1^{+\infty}\frac{1}{u^{q-r'+1}}\pr{1+\abs{\log u}}^{d-2}
=a_{q-r',d-2}w^{q-1}
\end{equation}
hence 
\begin{equation}
I\pr{w}\leq c_{d-1}w^{q-1} \pr{1+\abs{\log w}}^{d-1}\pr{1+a_{q-r',d-2}
+2^{d-1}a_{q-r',d-2}}.
\end{equation}
Now, if $w>1$, a similar result by spliting the integral into three parts (from $0$ to $1w$, from $1$ to $w$ and from $1$ to infinity) yields for $w>1$:
\begin{equation}
I\pr{w}\leq c_{d-1}w^{r'-1} \pr{1+\abs{\log w}}^{d-1}\pr{1+a_{q-r',d-2}
+2^{d-1}a_{q-r',d-2}}.
\end{equation}
This concludes the proof of Lemma~\ref{lem:Lemma2_proof_inequality_orthomatingales}.
\end{proof}

 \subsection{Proof of the results of Section~\ref{sec:applications}}
 
\begin{proof}[Proof of Theorem~\ref{thm:linear_regression}]
 A computation gives that 
 \begin{equation}
  \theta_n-\theta=\frac{\sum_{i=1}^n\phi_i\eps_i}{\sum_{j=1}^n\phi_j^2}.
 \end{equation}
We define 
\begin{equation}
 \xi_i:=\frac{\phi_i\eps_i}{\sum_{j=1}^n\phi_j^2}
\end{equation}
\begin{equation}
 \F_i:=\sigma\pr{\eps_u, 1\leq u\leq i, \phi_j, 1\leq j\leq n},\quad i\geq 1, \F_0=
 \sigma\pr{\phi_j, 1\leq j\leq n},
\end{equation}
and $\mathcal G_i:=\sigma\pr{\eps_u, 1\leq u\leq i}$ for $i\geq 1$ and 
$\mathcal G_0=\ens{\emptyset,\Omega}$.
In this way, for $i\geq 2$, 
\begin{equation} 
 \E{\xi_i\mid \F_{i-1}}=
 \frac{\phi_i}{\sum_{j=1}^n\phi_j^2}\E{\eps_i\mid \F_{i-1}}.
\end{equation}
Since $\sigma\pr{ \phi_j, 1\leq j\leq n}$ is independent of 
$\sigma\pr{\eps_u, 1\leq u\leq i}$, equality 
\begin{equation}
\E{\eps_i\mid \F_{i-1}}=\E{\eps_i\mid \sigma\pr{\eps_u,1\leq u\leq i}}
\end{equation}
holds and the right hand side was assumed to be equal to zero. Moreover, by independence, 
$\E{\xi_1\mid \F_0}=0$ hence $\pr{\xi_i,\F_i}_{i\geq 1}$ is a martingale 
differences sequence.
Since $\pr{\theta_n-\theta}\sqrt{\sum_{i=1}^n\phi_i^2}=\sum_{i=1}^n\xi_i$, an application of 
Theorem~\ref{thm:deviation_inequality_non_stationary_martingale_diff} with $B=\R$ and $r'=2$ yields 
\begin{equation}\label{eq:regression_proof_first_step}
 \PP\ens{\abs{\theta_n-\theta}\sqrt{\sum_{i=1}^n\phi_i^2}>x}\leq A_1+A_2,
\end{equation}
where 
\begin{equation}
 A_1=\frac{2^{q-2}}{2^q-1}q \int_0^1\PP\ens{\max_{1\leqslant i\leqslant n}
    \abs{\xi_i}>2^{-1-q/2} xu}u^{q-1}\mathrm du,
\end{equation}

\begin{equation}
 A_2=\frac{2^{q-2}}{2^q-1}q \int_0^1\PP\ens{
    \pr{\sum_{i=1}^n
    \E{\xi_i^2\mid \F_{i-1}}}^{1/2}>2^{-1-q/2} xu}u^{q-1}\mathrm du.
\end{equation}
We bound $A_1$ using Markov's inequality:
\begin{align}
 A_1&\leq \frac{2^{q-2}}{2^q-1}q \sum_{i=1}^n\int_0^1\PP\ens{ 
    \abs{\xi_i}>2^{-1-q/2} xu}u^{q-1}\mathrm du\\
  &\leq \frac{2^{q-2}}{2^q-1}\frac{q}{q-p} \sum_{i=1}^n 
    \E{\abs{\xi_i}^p} 2^{p+pq/2} x^{-p}.
\end{align}
Using independence and the convexity inequality $\sum_{i=1}^n\abs{\phi_i}^p
\leq \pr{\sum_{i=1}^n\phi_i^2}^{p/2}$ valid for $p\geq 2$, we get that 
$\E{\abs{\xi_i}^p}\leq C_1$ hence 
\begin{equation}\label{eq:regression_proof_second_step}
 A_1\leq C_1\frac{2^{q-2}}{2^q-1}\frac{q}{q-p}  2^{p+pq/2} x^{-p}.
\end{equation}
Now, in order to bound $A_2$, we notice that 
\begin{equation}
\E{\xi_i^2\mid \F_{i-1}}=
\frac{\phi_i^2}{\sum_{j=1}^n\phi_j^2}\E{\eps_i^2\mid \F_{i-1}},
\end{equation}
and since $\eps_i^2$ is independent of $ \pr{ \phi_j, 1\leq j\leq n}$, we derive that 
\begin{equation}
 \E{\xi_i^2\mid \F_{i-1}}=
\frac{\phi_i^2}{\sum_{j=1}^n\phi_j^2}\E{\eps_i^2\mid \mathcal G_{i-1}}
\leq C_2\frac{\phi_i^2}{\sum_{j=1}^n\phi_j^2}.
\end{equation}
Consequently, 
\begin{equation}
  \pr{\sum_{i=1}^n
    \E{\xi_i^2\mid \F_{i-1}}}^{1/2}\leq \sqrt{C_2},
\end{equation}
and 
\begin{equation}\label{eq:regression_proof_third_step}
 A_2\leq \frac{2^{q-2}}{2^q-1}q 2^{q+q^2/2}x^{-q}C_2^{q/2}.
\end{equation}
Theorem~\ref{thm:linear_regression}  follows from the combination of 
\eqref{eq:regression_proof_first_step}, \eqref{eq:regression_proof_second_step} and 
\eqref{eq:regression_proof_third_step}.

\end{proof}

 \begin{proof}[Proof of Theorem~\ref{thm:large_deviation_martingale_stationary_p_leq_2}]
 We use inequality \eqref{eq:deviation_inequality_stationary_martingale_diff} with $r'=r$ and $   q=2r$ to get 
 that for some constants $C$ and $c$ depending only on $r$ and $B$,  
 \begin{multline}
 \PP\ens{\max_{1\leq i\leq n}\norm{S_i}>n^{\alpha-1/r} n^{1/r}x}
 \leq Cn\int_0^1\PP\ens{X>cn^\alpha xu} u^{2r-1}\mathrm du
 \\+C\int_0^{+\infty}\PP\ens{V_1^{1/r}>cn^{\alpha-1/r}xu   }\min\ens{u^{2r-1},u^{r-1}}  \mathrm du.
 \end{multline}
 Observe that 
 \begin{align*}
 \sum_{n=1}^{+\infty}n^{r\alpha-1}\PP\ens{X>cn^\alpha xu}
 &= \sum_{n=1}^{+\infty}n^{r\alpha-1}\sum_{k= n}^{+\infty}\PP\ens{ 
X\in \left(k^\alpha xu,\pr{k+1}^\alpha xu\right] }\\
&=\sum_{k=1}^{+\infty}\sum_{n=1}^k n^{r\alpha-1}\PP\ens{ 
X\in \left(k^\alpha xu,\pr{k+1}^\alpha xu\right] }\\
&\leq \sum_{k=1}^{+\infty} k^{r\alpha}\PP\ens{ 
X\in \left(k^\alpha xu,\pr{k+1}^\alpha xu\right] }\\
&\leq \pr{xu}^{-r}\E{X^r}
 \end{align*}
 hence 
\begin{equation}
C \sum_{n=1}^{+\infty}n^{r\alpha-2}n\int_0^1\PP\ens{X>cn^\alpha xu} u^{2r-1}\mathrm du
\leq C  c^{-r}x^{-r}\int_0^1u^{r-1}\mathrm du.
\end{equation} 
Since for any non-negative random variable $Y$, $
 \sum_{n=1}^{+\infty}n^{r\alpha-2}\PP\ens{Y>n^{\alpha-1/r}   }
 \leq \E{Y\mathbf 1\ens{Y\geq 1}}$, we have
\begin{equation}
 \sum_{n=1}^{+\infty}n^{r\alpha-2}\PP\ens{V_1^{1/r}>cn^{\alpha-1/r}xu   }
 \leq \pr{cux}^{-r}\E{V_1\mathbf 1\ens{V_1^{1/r}\geq cxu}},
\end{equation}
which implies 
\begin{multline}
 \sum_{n=1}^{+\infty}n^{r\alpha-2}C\int_0^{+\infty}\PP\ens{V_1^{1/r}>cn^{\alpha-1/r}xu   }\min\ens{u^{2r-1},u^{r-1}}  \mathrm du\\
 \leq C\pr{cx}^{-r}\int_0^{+\infty}\E{V_1\mathbf 1\ens{V_1^{1/r}\geq cxu}}\min\ens{u^{r-1},u^{-1}}  \mathrm du.
\end{multline}
Now for any non-negative real number $y$, let $h\pr{y}:=\int_0^y\min\ens{u^{r-1},u^{-1}}  \mathrm du$.
If $y\leq 1$, then $h\pr{y}=y^r/r$ and if $y>1$, then 
\begin{equation}
h\pr{y}=\int_0^1u^{r-1}\mathrm du+
\int_1^{y}u^{-1}\mathrm du=\frac 1r+\log y.
\end{equation}
Since 
\begin{equation*}
\int_0^{+\infty}\E{V_1\mathbf 1\ens{V_1^{1/r}\geq cxu}}\min\ens{u^{r-1},u^{-1}}  \mathrm du
=\E{V_1 h\pr{ \frac{V_1^{1/r}}{cx}    } }\leq \frac 1r\E{V_1}+\frac 1r\E{V_1\log^+\pr{V_1}},
\end{equation*}
 we get the convergence of the series $
 \sum_{n=1}^{+\infty}n^{r\alpha -2}\PP\ens{\max_{1\leq i\leq n}\norm{S_i}>n^{\alpha}x}$.

 \end{proof}

 \begin{proof}[Proof of Theorem~\ref{thm:large_deviation_martingale_stationary_p_geq_2}]
  We use inequality \eqref{eq:deviation_inequality_stationary_martingale_diff_cond_var}
  with $\widetilde{x}:=x2^{n\pr{\alpha-1/2}}$,  $r'=2$ and $q=2p$. We get 
  \begin{multline}\label{eq:large_deviation_proof_1}
  n^{p\pr{\alpha -1/2}}\PP\ens{\max_{1\leq i\leq n}\norm{S_i}>n^{\alpha}x}\leq 
  C n^{p\pr{\alpha -1/2}+1} \int_0^1\PP\ens{
    X>cxun^{\alpha}}u^{2p-1}\mathrm du\\+C  n^{p\pr{\alpha -1/2}}\int_0^{+\infty}
    \PP\ens{V_1>x^2u^2 n^{2\alpha-1}}   \min\ens{u^{2p-1},u}\mathrm du.
  \end{multline}
  
\begin{enumerate}
 \item Assume that $X_1$ belongs to $\mathbb L^{p/2+1,\infty}$ and 
 $V_1\in \mathbb L^{p/2,\infty}$. One bounds the first term of the 
 right hand side of \eqref{eq:large_deviation_proof_1} by 
 \begin{multline}
  C n^{p\pr{\alpha -1/2}+1}\sup_{t>0}t^{p/2+1}\PP\ens{
    X>t}\int_0^1\prt{cxun^{\alpha}}^{-\prt{p/2+1}}u^{2p-1}\mathrm du\\
    =n^{   \pr{\frac p2-1}\pr{\alpha-1}} C\sup_{t>0}t^{p/2+1}\PP\ens{
    X>t}\prt{cx}^{-\prt{p/2+1}} \int_0^1u^{3p/2-2}\mathrm du
 \end{multline}
 and use $\pr{\frac p2-1}\pr{\alpha-1}\leq 0$.
One bounds the second term of the 
 right hand side of \eqref{eq:large_deviation_proof_1} by 
 \begin{multline}
  C n^{p\pr{\alpha -1/2}}\sup_{t>0}t^{p/2} 
  \PP\ens{V_1>t   }  \int_0^{+\infty}
 \prt{ cx^2u^2 n^{2\alpha-1} }^{-p/2}  \min\ens{u^{2p-1},u}\mathrm du\\
 = C\prt{cx}^{-p} \sup_{t>0}t^{p/2} 
  \PP\ens{V_1>t   }  \int_0^{+\infty}
  \min\ens{u^{p-1},u^{1-p}}\mathrm du,
 \end{multline}
and since $p>2$, the latter integral is finite. 

\item Assume that $X_1\in \mathbb L_0^{p/2+1,\infty}$ and 
 $V_1\in \mathbb L^{p/2,\infty}$. Plugging the bounds 
 \begin{equation}
  \PP\ens{X>cxun^{\alpha}} \leq 
  \prt{cxun^{\alpha}}^{-p/2-1}
    \sup_{t>cxu2^n}t^{p/2+1}\PP\ens{X>t}\mbox{ and }
 \end{equation}
\begin{equation}
 \PP\ens{V_1>cx^2u^2 n^{2\alpha-1}   }
 \leq \prt{cx^2u^2n^{2\alpha-1} }^{-p/2}\sup_{t>cx^2u^2 n^{2\alpha-1} }
    t^{p/2}\PP\ens{V_1>t   }.
\end{equation}
into \eqref{eq:large_deviation_proof_1}, we get 
\begin{multline}
 n^{p\pr{\alpha-1/2}} \PP\ens{\max_{1\leq i\leq n}\norm{S_i}>n^\alpha x} \leq 
  C\prt{cx}^{-p/2-1}\int_0^1 \sup_{t>cxun^\alpha}t^{p/2+1}\PP\ens{X>t}
  u^{3s/2-2}\mathrm du\\
  +C\prt{cx^2}^{-p/2} \int_0^{+\infty}
  \sup_{t>cx^2u^2 n^\alpha}
    t^{p/2}\PP\ens{V_1>t   }\max\ens{u^{p-1}, 
    u^{1-p}}\mathrm du,
\end{multline}
and the right hand side goes to zero by monotone convergence.
\item Assume that $X\in\mathbb L^{p/2+1}$ and $V_1
 \in \mathbb L^{p/2}$. In view of \eqref{eq:large_deviation_proof_1}, we have 
\begin{multline}
 \sum_{n=1}^{+\infty}n^{p\pr{\alpha-1/2}}
  \PP\ens{\max_{1\leq i\leq n}\norm{S_i\prt m}>n^{\alpha}x} \leq 
  C \sum_{n=1}^{+\infty}n^{p\pr{\alpha-1/2}+1} \int_0^1\PP\ens{
   X>cxun^{\alpha}}u^{2p-1}\mathrm du\\+C \sum_{n=1}^{+\infty}n^{p\pr{\alpha-1/2}}
   \int_0^{+\infty}
    \PP\ens{V_1>cx^2u^2 n^{\alpha-1/2}   }\min\ens{u^{2p-1},u}\mathrm du.
\end{multline}
Since for any non-negative random variable $Y$ and any $q>2$, $\sum_{n=1}^{+\infty}
n^{q-1}\PP\ens{Y>n}\leqslant \E{Y^q}$, we get the conclusion of item~\ref{itm:strong_moment}
of Theorem~\ref{thm:large_deviation_martingale_stationary_p_geq_2}.
\end{enumerate}  
 \end{proof}

The proof of Theorem~\ref{thm:large_deviation_id_distrib_p_geq_2} is completely analogous 
hence omitted.

\begin{proof}[Proof of Lemma~\ref{lem:lemme-tech}] We use the skyscrapers construction of Kakutani as 
in \cite{MR0198524}. 

Let $\pr{\ell_n}_{n\geq 1}$ be a non-increasing sequence of non-negative real 
numbers such that $\sum_{n\geq 1}\ell_n=1$. For every integer $n\geq 1$, set 
$X_n:=[0,\ell_n]\times\{n\}$ ($[0,\ell_n]$ equipped with the Lebesgue 
measure). Define then $X:= \cup_{n\geq 1}X_n$. Let $\tau$ be an ergodic 
transformation of $[0,\ell_0]$. Define an ergodic transformation 
$\theta$ on $X$ by $\theta(x,n)=(x, n+1)$ if $(x,n+1)\in X$ and by 
$\theta(x,n)=(\tau(x),0)$ otherwise.

Let $n\geq 0$. For every  $2^n\leq k\leq 2^{n+1}-1$, let $\ell_k=
\frac{\kappa}{2^{n(\gamma-1)}(n+1)^2(k+1-2^n)}$, where $\kappa$ is such that 
$\sum_{n\geq 1}\ell_n=1$.

For every $n\geq 0$ and every $(x,k)\in X$, with $2^n \leq k\leq 2^{n+1}-1$,
 set $f(x)= D(k+1-2^n)^{\gamma-1}$.  
 
 Let $0<\varepsilon\leq 1$. We have 
 \begin{equation*}
 \int_Xf (\log^+(f))^{1-\varepsilon}d\PP
 \leq CD \sum_{n\geq 0}2^{n(1-\gamma)}(n+1)^{-1-\varepsilon}\sum_{k=2^n}^{2^{n+1}-1}
 (k+1-2^n)^{\gamma-2}\leq \tilde CD \sum_{n\geq 0}(n+1)^{-1-\varepsilon} <\infty\, .
 \end{equation*}
 
 Taking $D$, large enough, we see that for every $n\geq 2$, 
 $\sum_{k=2^{n-2}}^{2^{n-1}-1}D\pr{k+1-2^{n-2}}^{\gamma-1} > 2^{n\gamma}$. 
 Hence, for that choice of $D$, we infer that 
 $f+\ldots + f\circ\theta^{2^n-1}> 
2^{n\gamma }$ on the set $\cup_{k=1}^{2^{n-1}}[0,\ell_{k+2^{n-1}-1}]
\times\{k\}$. Hence, 

\begin{equation*}
\sum_{n\geq 0} 2^{n(\gamma -1)} \PP\ens{f+\ldots + f\circ\theta^{2^n-1}> 
2^{n\gamma }}\geq \sum_{n\geq 0} 2^{n(\gamma -1)}
\sum_{k=1}^{2^{n-1}}\ell_{k+2^{n-1}-1}\geq\ c \sum_{ n\geq0}\frac1{n+1}
=+\infty\, ,
\end{equation*}
which finishes the proof.
\end{proof}

\begin{proof}[Proof of Proposition~\ref{prop:optimalite_cas_reel}] Let $\gamma=2\alpha$. Let $X$ be the 
probability space constructed in the proof of Lemma \ref{lem:lemme-tech}.  Let $\Omega_1$ be probability  space rich 
enough to support a sequence $\pr{\varepsilon_n}_{n\geq 1}$ of i.i.d. $\mathcal N(0,1)$ random 
 variables. Let $\Omega:= X\times \Omega_1$ with the product measure. 
 Let $f$ be the function satisfying the conclusion of Lemma 
 \ref{lem:lemme-tech}. For every $n\geq 1$, set 
 $X_n:= \varepsilon_n f^{1/2}\circ \theta^n$. Notice that $\pr{\varepsilon_n}
 _{n\geq 1}$ is independent from $\pr{f\circ \theta^n}_{n\geq 1}$ so that 
 $\pr{X_n}_{n\geq 1}$ is a stationary sequence of martingale differences (and ergodic). Set for every $n\geq 1$, $s_n:= \pr{ \sum_{i=1}^nf\circ \theta^i}^{1/2}$. We have, using independence, 
 \begin{align*}
 \sum_{n\geq 0} 2^{n(\alpha-1)}\PP\ens{\abs{\sum_{i=1}^{2^n}X_i}> 2^{n\alpha}}
 &= \frac2{\sqrt{2\pi}} \sum_{n\geq 0} 2^{n(2\alpha-1)} 
 \E{\int_{2^{n\alpha}/s_{2^n}}^{+\infty} {\rm e}^{-x^2/2}\, dx}\\
 &= \frac2{\sqrt{2\pi}} \int_0^{+\infty} \pr{\sum_{n\geq 0}
 2^{n(2\alpha-1)}\PP\ens{s_{2^n}^2> 2^{2n\alpha}/x^2}}\, {\rm e}^{-x^2/2}\, dx
 \\ &\geq \frac2{\sqrt{2\pi}} \pr{\int_0^1{\rm e}^{-x^2/2}\, dx}
 \sum_{n\geq 0}
 2^{n(2\alpha-1)}\PP\ens{s_{2^n}^2> 2^{2n\alpha}}=+\infty\, .
 \end{align*}
 \end{proof}

 \begin{proof}[Proof of Theorem~\ref{thm:large_deviation_orthomartingale_stationary_p_leq_2}]
We apply Theorem\ref{thm:deviation_inequality_stationary_orthomartingale_diff} with 
$r'=r$, $q=2r$ and $x:=\varepsilon \abs{\gr{n}}^{\alpha-1/r}$ in order to get 
\begin{multline}
\PP\ens{\max_{\gr{1}\imd\gr{i}\imd\gr{n}}\norm{S_{\gr{i}}}>
\varepsilon \abs{\gr{n}}^{\alpha}   }\\
\leq C\int_0^{+\infty}
    \PP\ens{\norm{X_{\gr{1}}}>\varepsilon \abs{\gr{n}}^{\alpha-1/r}u}\min\ens{u^{2r-1},u^{r-1}}\prt{1+\abs{\log u}}^{d-1}
    \mathrm du.
\end{multline}
Multiplying by $\abs{\gr{n}}^{r\alpha-2}$, summing over $\gr{n}\in \N^d$ and noticing that 
for any fixed $N$, the number of elements $\gr{k}\in \N^d$ such that $\sum_{i=1}^dk_i=N$ is 
$c_d \pr{N^{d-1}+1}$ for some constant $c_d$ depending only on $d$, we get 
\begin{multline*}
\sum_{\gr{n}\in \N^d}\abs{\gr{n}}^{r\alpha-2}\PP\ens{\max_{\gr{1}\imd\gr{i}\imd\gr{n}}\norm{S_{\gr{i}}}>
\varepsilon \abs{\gr{n}}^{\alpha}   }\\
\leq C'\int_0^{+\infty}\sum_{\gr{k}\in \N^d}2^{\pr{r\alpha-1}\sum_{i=1}^dk_i  }
    \PP\ens{\norm{X_{\gr{1}}}>\varepsilon 2^{\pr{\alpha-1/r}\sum_{i=1}^dk_i  }u}\min\ens{u^{2r-1},u^{r-1}}\prt{1+\abs{\log u}}^{d-1}
    \mathrm du\\
  \leq C''\int_0^{+\infty}\sum_{N=1}^{+\infty}2^{N\pr{r\alpha-1} }N^{d-1}
    \PP\ens{\norm{X_{\gr{1}}}>\varepsilon 2^{\pr{\alpha-1/r}N  }u}\min\ens{u^{2r-1},u^{r-1}}\prt{1+\abs{\log u}}^{d-1}
    \mathrm du
\end{multline*}
and using the fact that for any real valued random variable $Y$, 
\begin{equation}
\sum_{N=1}^{+\infty}2^{N\pr{r\alpha-1} }N^{d-1}
    \PP\ens{Y> 2^{\pr{\alpha-1/r}N  }}
 \leq K_{p,\alpha,r,d}\E{Y^{r}
\pr{\log\pr{Y}  }^{d-1}\mathbf 1\ens{Y\geq 1} 
 } ,
\end{equation}
we are reduced to prove finiteness of 
\begin{equation}
\E{\int_0^{\frac{\norm{X_{\gr{1}}}}{\varepsilon}} \pr{\frac{\norm{X_{\gr{1}}}}{u\varepsilon}}^{r}
\pr{\log\pr{\frac{\norm{X_{\gr{1}}}}{u\varepsilon}}  }^{d-1}\min\ens{u^{2r-1},u^{r-1}}\prt{1+
\abs{\log u}}^{d-1}    \mathrm du}.
\end{equation} 
 Let 
 \begin{equation}
 Y:=\int_0^{\frac{\norm{X_{\gr{1}}}}{\varepsilon}} \pr{\frac{\norm{X_{\gr{1}}}}{u\varepsilon}}^{r}
 \pr{\log \pr{\frac{\norm{X_{\gr{1}}}}{u\varepsilon}}  }^{d-1}\min\ens{u^{2r-1},u^{r-1}}
 \prt{1+\abs{\log u}}^{d-1}    \mathrm du.
 \end{equation}
Assume that $\norm{X_1}\leq \varepsilon$. Then 
\begin{equation}
Y=\int_0^{\frac{\norm{X_{\gr{1}}}}{\varepsilon}} \pr{\frac{\norm{X_{\gr{1}}}}{u\varepsilon}}^{r}
\pr{\log\pr{\frac{\norm{X_{\gr{1}}}}{u\varepsilon}}  }^{d-1}u^{2r-1}
 \prt{1+\abs{\log u}}^{d-1}    \mathrm du
\end{equation}
and the substitution $v=u/\norm{X_1}$ shows that 
\begin{equation}
Y\leq C  \norm{X_{\gr{1}}}^{2r}\pr{1-\log\pr{ \norm{X_{\gr{1}}  }}}^{d-1}.
\end{equation}
Now if we assume that $\norm{X_{\gr{1}}}>\varepsilon$, then 
\begin{multline}
Y=\int_0^{1} \pr{\frac{\norm{X_{\gr{1}}}}{u\varepsilon}}^{r}
\pr{\log\pr{\frac{\norm{X_{\gr{1}}}}{u\varepsilon}}  }^{d-1}u^{2r-1}
 \prt{1+\abs{\log u}}^{d-1}    \mathrm du\\
 +\int_1^{\frac{\norm{X_{\gr{1}}}}{\varepsilon}} \pr{\frac{\norm{X_{\gr{1}}}}{u\varepsilon}}^{r}
 \pr{\log\pr{\frac{\norm{X_{\gr{1}}}}{u\varepsilon}}  }^{d-1}u^{r-1}
 \prt{1+\abs{\log u}}^{d-1}    \mathrm du
\end{multline}
and the first term of the right-hand-side can be controlled by $C\norm{X_{\gr{1}}}^r
\pr{1+\abs{\log \norm{X_{\gr{1}}}}}^{d-1}$, while for the second, the substitution 
$t:=\log u$ and an integration by parts yield $Y\leq C\norm{X_{\gr{1}}}^r
\pr{1+\abs{\log \norm{X_{\gr{1}}}}}^{d}$. We thus got the estimate 
\begin{equation}
Y\leq C\norm{X_{\gr{1}}}^r\pr{1+\abs{\log \norm{X_{\gr{1}}}}}^{d}
\end{equation}
where $C$ depends only on $\varepsilon$, $d$ and $r$. Since $X_{\gr{1}}$ belongs to 
$\mathbb L^r\log^d\mathbb L$, we proved \eqref{eq:Baum_Katz_champs} 
and the proof of Theorem~ \ref{thm:large_deviation_orthomartingale_stationary_p_geq_2}
is finished.
 \end{proof}
 
 \begin{proof}[Proof of Theorem~\ref{thm:large_deviation_orthomartingale_stationary_p_geq_2}]
We apply Theorem\ref{thm:deviation_inequality_stationary_orthomartingale_diff} with 
$r'=2$, $q=2p$ and $x:=\varepsilon \abs{\gr{n}}^{\alpha-1/2}$ in order to get 
\begin{multline}
\PP\ens{\max_{\gr{1}\imd\gr{i}\imd\gr{n}}\norm{S_{\gr{i}}}>
\varepsilon \abs{\gr{n}}^{\alpha}   }\\
\leq C\int_0^{+\infty}
    \PP\ens{\norm{X_{\gr{1}}}>\varepsilon \abs{\gr{n}}^{\alpha-1/2}u}
    \min\ens{u^{2p-1},u}\prt{1+\abs{\log u}}^{d-1}  \mathrm du.
\end{multline}
Multiplying by $\abs{\gr{n}}^{p\pr{\alpha-1/2}-1}$, summing over $\gr{n}\in \N^d$ and noticing that 
for any fixed $N$, the number of elements $\gr{k}\in \N^d$ such that $\sum_{i=1}^dk_i=N$ is 
$c_d \pr{N^{d-1}+1}$ for some constant $c_d$ depending only on $d$, we get 
\begin{multline*}
\sum_{\gr{n}\in \N^d}\abs{\gr{n}}^{p\pr{\alpha-1/2}-1}\PP\ens{\max_{\gr{1}\imd\gr{i}\imd\gr{n}}
\norm{S_{\gr{i}}}>
\varepsilon \abs{\gr{n}}^{\alpha}   }\\
\leq C'\int_0^{+\infty}\sum_{\gr{k}\in \N^d}2^{\pr{p\pr{\alpha-1/2}-1}\sum_{i=1}^dk_i  }
    \PP\ens{\norm{X_{\gr{1}}}>\varepsilon 2^{\pr{\alpha-1/2}\sum_{i=1}^dk_i  }u}\min
    \ens{u^{2p-1},u}  \prt{1+\abs{\log u}}^{d-1}
    \mathrm du\\
  \leq C''\int_0^{+\infty}\sum_{N=1}^{+\infty}2^{Np\pr{\alpha-1/2} }N^{d-1}
    \PP\ens{\norm{X_{\gr{1}}}>\varepsilon 2^{\pr{\alpha-1/p}N  }u}\min\ens{u^{2p-1},u}\prt{1+
    \abs{\log u}}^{d-1}\mathrm du
\end{multline*}
and using the fact that for any real valued random variable $Y$, 
\begin{equation}
\sum_{N=1}^{+\infty}2^{Np\pr{\alpha-1/2} }N^{d-1}
    \PP\ens{Y> 2^{\pr{\alpha-1/2}N  }}
 \leq K_{p,\alpha,r,d}\E{Y^{p}
\pr{\log\pr{Y}  }^{d-1}\mathbf 1\ens{Y\geq 1} 
 } ,
\end{equation}
we are reduced to prove finiteness of 
\begin{equation}
\E{\int_0^{\frac{\norm{X_{\gr{1}}}}{\varepsilon}} \pr{\frac{\norm{X_{\gr{1}}}}{u\varepsilon}}^{p}
\pr{\log\pr{\frac{\norm{X_{\gr{1}}}}{u\varepsilon}}  }^{d-1}\min\ens{u^{2p-1},u}\prt{1+\abs{\log 
u}}^{d-1}    \mathrm du}.
\end{equation}   
Let 
\begin{equation}
Y:=\int_0^{\frac{\norm{X_{\gr{1}}}}{\varepsilon}} \pr{\frac{\norm{X_{\gr{1}}}}{u\varepsilon}}^{p}
\pr{\log\pr{\frac{\norm{X_{\gr{1}}}}{u\varepsilon}}  }^{d-1}\min\ens{u^{2p-1},u}\prt{1+\abs{\log 
u}}^{d-1}    \mathrm du.
\end{equation}
If $\norm{X_1}/\varepsilon\leq 1$, then 
\begin{multline}
Y=\int_0^{\frac{\norm{X_{\gr{1}}}}{\varepsilon}} \pr{\frac{\norm{X_{\gr{1}}}}{u\varepsilon}}^{p}
\pr{\log\pr{\frac{\norm{X_{\gr{1}}}}{u\varepsilon}}  }^{d-1} u^{2p-1} \prt{1+\abs{\log 
u}}^{d-1}    \mathrm du\\
\leq C\norm{X_{\gr{1}}}^{2p}\prt{1+\abs{\log 
\norm{X_{\gr{1}}}}}^{d-1}.
\end{multline}
If $\norm{X_{\gr{1}}}/\varepsilon> 1$, then 
\begin{multline}
Y=\int_0^1\pr{\frac{\norm{X_{\gr{1}}}}{u\varepsilon}}^{p}
\pr{\log\pr{\frac{\norm{X_{\gr{1}}}}{u\varepsilon}}  }^{d-1} u^{2p-1} \prt{1+\abs{\log 
u}}^{d-1}    \mathrm du\\
+\int_1^{\norm{X_{\gr{1}}}}\pr{\frac{\norm{X_{\gr{1}}}}{u\varepsilon}}^{p}
\pr{\log\pr{\frac{\norm{X_{\gr{1}}}}{u\varepsilon}}  }^{d-1} u\prt{1+\abs{\log 
u}}^{d-1}    \mathrm du.
\end{multline}
The first term can be bounded by $C\norm{X_{\gr{1}}}^p\pr{1+
\abs{\log\norm{X_{\gr{1}}}}}^{d-1}$
and for the second one, the substitution $t:=\log u$ shows that a similar upper bound can 
be given. This ends the proof of Theorem~\ref{thm:large_deviation_orthomartingale_stationary_p_geq_2}.
 \end{proof}

\textbf{Acknowledgment} The author would like to thank Christophe Cuny for giving the 
statement and proof of Lemma~\ref{lem:lemme-tech} and Proposition~\ref{prop:optimalite_cas_reel}.
 

\def\polhk\#1{\setbox0=\hbox{\#1}{{\o}oalign{\hidewidth
  \lower1.5ex\hbox{`}\hidewidth\crcr\unhbox0}}}\def\cprime{$'$}
  \def\polhk#1{\setbox0=\hbox{#1}{\ooalign{\hidewidth
  \lower1.5ex\hbox{`}\hidewidth\crcr\unhbox0}}} \def\cprime{$'$}
\providecommand{\bysame}{\leavevmode\hbox to3em{\hrulefill}\thinspace}
\providecommand{\MR}{\relax\ifhmode\unskip\space\fi MR }
\providecommand{\MRhref}[2]{%
  \href{http://www.ams.org/mathscinet-getitem?mr=#1}{#2}
}

\end{document}